\setlist{leftmargin=*}
\newtheorem{theorem}{Theorem}[section]
\newtheorem{corollary}[theorem]{Corollary}
\newtheorem{lemma}[theorem]{Lemma}
\newtheorem{fact}[theorem]{Fact}
\newtheorem{ask}{Question}
\theoremstyle{definition}
\newtheorem{definition}{Definition}[section]
\theoremstyle{remark}
\newtheorem{remark}[theorem]{Remark}
\newtheorem{example}[theorem]{Example}
\DeclareMathOperator{\GL}{GL}
\gdef\gl{\mathfrak{gl}}
\gdef\fg{\mathfrak{g}}
\gdef\GG{\mathbb{G}}
\gdef\TT{\mathbb{T}}
\gdef\RR{\mathbb{R}}
\gdef\DD{\mathbb{D}}
\gdef\HH{\mathbb{H}}
\gdef\SS{\mathbb{S}}
\gdef\P{\mathbb P}
\title{Solvable Lie groups definable in o-minimal theories}
\author{Annalisa Conversano}
\address{Annalisa Conversano, Massey University Albany, INMS, IIMS Building, Private Bag 102904, North Shore City 0745, New Zealand}
\email{a.conversano@massey.ac.nz}
\author{Alf Onshuus}
\address{ Alf Onshuus\\ Departamento de Matem\'aticas \\Universidad de los Andes \\
   Cra 1 No. 18A-10, Edificio H\\
   Bogot\'a, 111711\\
   Colombia}
\email{aonshuus@uniandes.edu.co}
\author{Sergei Starchenko}
\address{ Sergei Starchenko, Department of Mathematics, University of
  Notre Dame, Notre Dame, IN 46556}
\email{starchenko.1@nd.edu}
\thanks{The third author was partially supported by NSF}
\keywords{definable groups, completely solvable torsion-free groups, o-minimal group cohomology, Lie groups, Lie algebras}
\subjclass[2010]{03C64; 20J06; 06F25; 22E25}
\begin{document}

\begin{abstract}
In this paper we completely characterize solvable real Lie groups
definable in o-minimal expansions of the real field.
\end{abstract}

\maketitle

\section{Introduction}
\label{sec:introduction}

It is proved in \cite{Pi88} that every group definable in an o-minimal
expansion of the real field is a Lie group (i.e. it can be equipped with
a smooth manifold structure so that the group operations are smooth),
and it is natural to ask when the opposite is true.

\begin{ask}\label{ask:main}
  What real  Lie groups are Lie isomorphic to groups definable in o-minimal
  expansions of the real field?
\end{ask}

A significance of the above question is that for groups definable in
o-minimal  structures one can use the well developed and powerfull tools of
o-minimality (see \cite{LVD} and \cite{DM} for more details on o-minimality).

\medskip

In this paper we give a complete answer to  Question \ref{ask:main} for solvable
Lie groups.  Let $G$ be a solvable connected Lie group. Assume $G$
is  definable in an o-minimal expansion of the real
field. Then, by properties of o-minimal groups (see Fact \ref{CP})
$G$ contains a definable normal torsion-free subgroup $H$ such that
$G/H$ is compact.

First we  prove  (see Theorem \ref{completely solvableThm}) that every torsion-free group definable
in an o-minimal structure is completely solvable.  It follows then that  a necessary
condition for a connected solvable Lie group $G$ to be   Lie
isomorphic to a group definable in an o-minimal expansion of the real
field is that $G$   contains  a  torsion-free completely solvable  subgroup $N$ such that $G/N$ is compact;
and we prove (see Theorem \ref{thm:solv-lie-def}) that this condition is sufficient.
Moreover every solvable Lie group  satisfying this condition is Lie
isomorphic to a group definable in the structure ${\mathbb
  R}_{an,exp}$.

\medskip

By the Levi decomposition, any connected Lie group $G$ is the product of its solvable radical $R$ and
a semisimple subgroup $S$. We know by results in \cite{PPS00} and
\cite{PPS} that any semisimple Lie group is Lie  isomorphic to a group
definable in an o-minimal expansion of the field  of reals if and only
if it has a finite center.  Thus to answer Question \ref{ask:main}
in the whole generality  one needs   to understand definability  of  actions of semi-simple groups  on
solvable  groups.

\subsection{The structure of the paper}
\label{sec:structure-paper}

In Section \ref{sec:preliminaries}
we recall basic facts about groups definable in o-minimal structures
and Lie groups that we will need in this paper.

In Section \ref{completely solvable} we prove that every  completely
solvable connected torsion-free real
Lie group is Lie isomorphic to a group definable in the structure
${\mathbb R}_{exp}$ (see Theorem
\ref{LieAreDef}).

In Section \ref{Appendix} we prove that every torsion-free group
definable in an \emph{arbitrary} o-minimal structure is completely
solvable (see Theorem \ref{completely solvableThm}).

In Section \ref{SS:Lie groups} we answer Question \ref{ask:main} for
solvable Lie groups (see Theorem \ref{thm:solv-lie-def} there).

\section{Preliminaries.}
\label{sec:preliminaries}

\subsection{Solvable and completely solvable Lie groups}
\label{sec:supers-lie-groups}

In this paper by a Lie group we always mean a real Lie group.

The following fact follows from Theorem 3.1 and Corollary 1 in  \cite[Chapter 2]{OnishIII}.
\begin{fact} \label{fact:tor-free-simply}
For a connected solvable Lie group $G$ the following are
  equivalent.
  \begin{enumerate}
  \item  $G$ is torsion-free.
  \item $G$ is simply connected.
  \item $G$ is diffeomorphic to $\RR^n$.
  \end{enumerate}
\end{fact}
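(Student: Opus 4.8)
The plan is to prove the cycle $(3)\Rightarrow(2)\Rightarrow(1)\Rightarrow(3)$, taking as the essential input the classical structure theorem for connected solvable Lie groups (this is exactly the content of \cite[Chapter 2, Theorem 3.1 and Corollary 1]{OnishIII}, due to Mostow and Malcev): every connected solvable Lie group $G$ has a maximal compact subgroup $T$, which is a torus, unique up to conjugacy, and onto which $G$ deformation retracts; consequently $G$ is diffeomorphic to $T\times\RR^m$ with $m=\dim G-\dim T$, and every compact subgroup of $G$ is contained in a conjugate of $T$. Granting this, I would organize the three implications as follows.

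For $(3)\Rightarrow(2)$: $\RR^n$ is simply connected and $\pi_1$ is a diffeomorphism invariant, so $G$ is simply connected. For $(2)\Rightarrow(1)$: suppose $g\in G$ has finite order $k>1$; then the finite (hence compact) subgroup $\langle g\rangle$ lies in some conjugate of the maximal torus $T$, so it suffices to see $T=\{e\}$. But $G$ deformation retracts onto $T$, so $\mathbb{Z}^{\dim T}\cong\pi_1(T)\cong\pi_1(G)=0$, forcing $\dim T=0$ and hence $T=\{e\}$ (as $T$ is connected); this contradiction shows $G$ is torsion-free. For $(1)\Rightarrow(3)$: if $G$ is torsion-free then its maximal torus $T$ is a torsion-free torus, hence trivial, and so $G$ is diffeomorphic to $\{e\}\times\RR^m=\RR^m$.

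The genuine content here is the structure theorem itself; once it is in hand the equivalences are formal, which is why this is recorded as a \emph{Fact} with a citation rather than proved from scratch. If one did want a self-contained argument, the only nontrivial step is $(1)\Rightarrow(2)$: passing to the universal cover $\widetilde G\xrightarrow{\pi}G$ with $\Gamma=\ker\pi\le Z(\widetilde G)$ discrete, one first shows by a short induction on $\dim\widetilde G$ (using a one-dimensional connected normal subgroup and the long exact homotopy sequence) that $\widetilde G$, being simply connected and solvable, is diffeomorphic to $\RR^n$ and hence has no nontrivial compact subgroups; the delicate point that remains is to show that if $\Gamma\neq\{e\}$ then $\pi_1(G)$ is ``detected'' by a toral quotient — i.e. $G$ contains a circle subgroup, and therefore torsion. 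I expect this last step to be the main obstacle in a from-scratch treatment, and since the \emph{Fact} is used below only as a quoted ingredient, appealing to \cite{OnishIII} is the economical route.
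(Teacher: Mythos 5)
Your argument is correct: granting the Cartan--Iwasawa--Malcev structure theory for connected solvable Lie groups (maximal compact subgroup a torus $T$, every compact subgroup conjugate into $T$, and $G$ diffeomorphic to $T\times\RR^m$), the cycle $(3)\Rightarrow(2)\Rightarrow(1)\Rightarrow(3)$ you give is sound, and your identification of $(1)\Rightarrow(2)$ as the only genuinely delicate point (illustrated in the paper by $\widetilde{E}^0(2)$, whose quotient by its central $\mathbb{Z}$ acquires the torsion of $SO(2)$) is apt. The paper itself offers no proof, stating this as a Fact deduced from \cite{OnishIII}, so your treatment takes essentially the same route: quote the classical structure theorem and observe that the equivalences follow formally.
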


Thus for connected solvable Lie group $G$ we will use torsion-free and
simply connected interchangeably.

\medskip
We now turn to completely solvable Lie groups and Lie algebras.

\begin{definition} Let $\fg$ be a Lie algebra.
\begin{enumerate}
\item  \emph{A flag of ideals} in $\fg$ is a chain
\[  \fg=\fg_n > \fg_{n-1} > \dotsb >\fg_0 = 0\]
such that each $\fg_i$ is an ideal of $\fg$.
\item A flag of ideals $\fg=\fg_n > \fg_{n-1} > \dotsb >\fg_0 = 0$ is called
  \emph{complete} if $\dim(\fg_i)=i$ for each $i=0,\dotsc,n$.

\item  A real Lie algebra $\fg$ is called \emph{completely solvable} (also
often called \emph{split-solvable}) if  it has a complete flag of ideals.
\end{enumerate}
 \end{definition}

A connected Lie group is called \emph{completely solvable} (also often
called \emph{triangular} or \emph{split-solvable}) if its
corresponding Lie algebra is.

By the
functorial correspondence between  simply connected  Lie groups and
their Lie algebras, and using \cite[Section2, Theorem 3.1]{OnishIII}  one obtains the following
alternative definition of connected torsion-free  completely solvable  Lie groups.

\begin{fact}\label{fact:lie-split}
A connected torsion-free  solvable Lie group $G$ is completely solvable if and only
if  there
exist a  sequence of subgroups
\[ G=G_n  > G_{n-1} > \dotsb G_0 =\{ e\} \]
such that each $G_i$ is normal in $G$ and $G_{i+1}/G_{i}$ is one-dimensional simply connected Lie group for $i<n$.
\end{fact}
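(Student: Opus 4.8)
The plan is to pass back and forth between the flag of ideals in $\fg=\operatorname{Lie}(G)$ and the chain of subgroups in $G$. Recall (Fact \ref{fact:tor-free-simply}) that for a connected solvable Lie group being torsion-free is the same as being simply connected, so $G$ is simply connected. The ingredients I would rely on --- all contained in \cite[Section~2, Theorem~3.1]{OnishIII} --- are: (i) for a simply connected solvable $G$ the map $H\mapsto\operatorname{Lie}(H)$ is a bijection from connected Lie subgroups of $G$, each of which is automatically closed and simply connected, onto subalgebras of $\fg$; (ii) under this bijection normal subgroups correspond to ideals; and (iii) a quotient of a simply connected Lie group by a connected closed normal subgroup is again simply connected.

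For the ``if'' direction I would start from a chain $G=G_n>\dotsb>G_0=\{e\}$ with each $G_i$ normal in $G$ and each $G_{i+1}/G_i$ one-dimensional simply connected, and take Lie algebras. Setting $\fg_i:=\operatorname{Lie}(G_i)$, each $\fg_i$ is an ideal of $\fg$ by (ii) (using that $G$ is connected), and $\dim\fg_{i+1}-\dim\fg_i=\dim(G_{i+1}/G_i)=1$; hence $\fg=\fg_n>\dotsb>\fg_0=0$ is a complete flag of ideals, so $\fg$, and therefore $G$, is completely solvable.

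For the ``only if'' direction I would start from a complete flag of ideals $\fg=\fg_n>\dotsb>\fg_0=0$ and let $G_i$ be the connected subgroup of $G$ integrating $\fg_i$. By (i) and (ii), $G_i$ is closed, simply connected, and normal in $G$. Then $G_{i+1}/G_i$ is a connected Lie group with Lie algebra the one-dimensional $\fg_{i+1}/\fg_i$, and by (iii) it is simply connected, being the quotient of the simply connected $G_{i+1}$ by the connected closed normal subgroup $G_i$; hence it is a one-dimensional simply connected Lie group, as required.

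I do not expect a genuine obstacle here: once the standard structure theory of simply connected solvable Lie groups from \cite{OnishIII} is invoked in the form (i)--(iii), both directions reduce to dimension counting on the Lie algebra side together with the normal/ideal dictionary, and there is nothing further to check.
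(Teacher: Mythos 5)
Your argument is correct and is essentially the paper's own justification: the paper simply asserts the fact as a consequence of the functorial correspondence between simply connected Lie groups and their Lie algebras together with \cite[Section 2, Theorem 3.1]{OnishIII}, which is exactly the dictionary (i)--(iii) you spell out and then combine with dimension counting. Your write-up just makes that sketch explicit, so there is nothing genuinely different to compare.
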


The following is a well-known example of a connected torsion-free
solvable group $\widetilde{E}^0(2)$ that is not completely solvable
(see \cite[Chapter 2, Section 6.4]{OnishIII} and also \cite[Chapter 1,
  Section 1, Example 12(c)]{Knapp}).
  \begin{example}\label{sample:noncompl-solv}
Let    $ \widetilde{E}^0(2)$ be the semi-direct product
$ \widetilde{E}^0(2) = \mathbb{R}^2 \rtimes_{\gamma} \mathbb{R}$, where for $x \in \mathbb{R}$
\[
\gamma(x) = \begin{bmatrix} \ \  \cos 2 \pi x & \sin 2 \pi x \\
-\sin 2 \pi x & \cos 2 \pi x \end{bmatrix}.
\]
The group $\widetilde{E}^0(2)$ is connected torsion-free solvable
group that is not completely solvable. It is a simply connected group
with the Lie algebra of all matrices
\[
\begin{pmatrix}
0 & \theta & x \\
-\theta  & 0 & y \\
0 & 0 & 0
\end{pmatrix}.
\]
\end{example}

\subsection{Groups definable in o-minimal structures}
\label{sec:groups-definable-o}

We refer to \cite{vdd} for basics on o-minimal structures.

In this paper we will need two particular o-minimal expansions of the
real field.

\begin{fact}[See \cite{wilkie}]
The expansion of the real field by
the exponential function $e^x$ is o-minimal. (This structure is denoted
by ${\mathbb R}_{exp}$.)
\end{fact}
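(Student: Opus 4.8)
The plan is to follow Wilkie's original route, which splits the statement into a model-theoretic part and an analytic part. Recall that $\RR_{exp}$ is o-minimal precisely when every subset of $\RR$ definable in it is a finite union of points and open intervals; all the machinery of cell decomposition then comes for free from general o-minimality theory. So the target reduces to: (a) \emph{model completeness} of $\RR_{exp}$, i.e.\ every definable set in $\RR^n$ is existentially definable and hence is a coordinate projection of the common zero set of finitely many \emph{exponential polynomials} (terms built from $+$, $\cdot$ and $\exp$); and (b) a bound on the number of connected components of such a projected set in $\RR^1$ depending only on the syntactic data of the defining terms, not on the real parameters.

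For (a), the model-completeness part, I would argue by induction on the number of occurrences of $\exp$, showing that the complement of a projected exponential variety is again such a set. The clean way to organize this is via a ``theorem of the complement'': a structure generated by functions forming a Pfaffian chain is automatically model complete and o-minimal once one knows that every finite system of the corresponding polynomial-in-the-chain equations in $\RR^n$ has finitely many connected components, with a bound uniform in the coefficients. The inductive step freezes the innermost exponentiated subterms $\exp(t_i)$ as new coordinates $y_i$ constrained by $y_i=\exp(t_i)$, reducing the number of nested exponentials at the cost of extra variables; one then uses the implicit function theorem together with a Khovanskii-type non-degeneracy condition to put the enlarged variety in a manageable form, and treats the charts where the relevant Jacobian drops rank by passing to lower-dimensional strata and re-running the induction there.

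For (b), the analytic input comes from Khovanskii's theory of fewnomials. Iterated exponentials form a Pfaffian chain: each new function $\exp(f)$ has partial derivatives equal to $\exp(f)$ times the (polynomial in earlier chain functions) partials of $f$. Khovanskii's fewnomial bounds then give that a system of $n$ equations polynomial in the variables and such chain functions has finitely many nondegenerate real solutions, with an explicit bound depending only on degrees and the chain length; a Morse-theoretic stratification upgrades this from counting isolated points to bounding the connected components of the whole zero set, still uniformly in the coefficients. Passing from a zero set to its projection --- a \emph{sub-Pfaffian} set --- while keeping the bound uniform is Gabrielov's complement theorem for Pfaffian sets, and that is precisely the finiteness hypothesis that (a) needs; in particular a projected exponential variety in $\RR$ is a finite union of points and intervals, which is o-minimality.

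The main obstacle is step (a), or equivalently making the theorem of the complement genuinely apply. Two things are delicate. First, a projection of a Pfaffian variety need not itself be Pfaffian, so the finiteness of its connected components is not a direct Khovanskii count but requires the Khovanskii--Gabrielov machinery on sub-Pfaffian sets. Second, one must have the finiteness bounds \emph{uniform in parameters} in order to rule out infinite definable discrete subsets of $\RR$, which forces Khovanskii's estimates to be run over families and the dependence on data to be tracked carefully through every inductive stage. Once both are in hand, the passage to full o-minimality and cell decomposition is routine.
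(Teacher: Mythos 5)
This Fact is Wilkie's theorem, and the paper does not prove it: it is imported as a black box with the citation \cite{wilkie}, so the only fair comparison is between your outline and the actual content of that paper. Your outline correctly names the standard architecture (model completeness, so that every definable set is a coordinate projection of an exponential variety, plus Khovanskii's finiteness theorem for Pfaffian systems), but it is not a proof. The whole difficulty is your step (a): model completeness of $\RR_{exp}$ --- equivalently, a theorem of the complement with the requisite finiteness --- \emph{is} the main theorem of Wilkie's paper, and the inductive scheme you sketch (freeze the innermost exponentials as new variables, apply the implicit function theorem under a Khovanskii-type nondegeneracy hypothesis, and recurse on lower-dimensional strata where the Jacobian drops rank) is a description of the obstacle rather than an argument that overcomes it; making that induction close, with control of the degenerate strata and of limits of implicitly defined solutions, is precisely where the hard analytic--model-theoretic work of \cite{wilkie} lives. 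Appealing to Gabrielov's complement theorem for sub-Pfaffian sets does not repair this: that is itself a deep, independent (and in this form later) result, underlying the alternative route through Wilkie's 1999 ``theorem of the complement'' and Speissegger's Pfaffian closure, and using it as ``precisely the finiteness hypothesis that (a) needs'' makes your two steps lean on each other rather than on anything you establish.

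Two smaller inaccuracies. First, uniformity of the bounds in the real parameters is not what rules out infinite discrete definable subsets of $\RR$: once model completeness is known, a definable $X\subseteq\RR$ is the projection of a single exponential variety with fixed real coefficients, and Khovanskii's theorem applied to that one variety already gives finitely many connected components; uniformity (or the Knight--Pillay--Steinhorn theorem) is only needed to pass from o-minimality of the structure $\RR_{exp}$ to o-minimality of its complete theory. Second, uniform bounds do play a role \emph{inside} Wilkie's proof of model completeness itself, so your division of labor between (a) and (b) understates how intertwined the two steps are in the actual argument. For the purposes of this paper the correct move is simply to cite \cite{wilkie} (and \cite{MMV} for $\RR_{an,exp}$), as the authors do, rather than to reprove the theorem.
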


\begin{fact}[See  \cite{MMV}]
The expansion of the real field by the exponential function and all restrictions of
analytic functions to compact domains is o-minimal.  (This structure
is denoted by ${\mathbb R}_{an, exp}$.)
\end{fact}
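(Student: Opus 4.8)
The plan is to deduce o-minimality of ${\mathbb R}_{an,exp}$ by combining two ingredients — the o-minimality and model completeness of the real field ${\mathbb R}_{an}$ with all \emph{restricted} analytic functions, and the theorem on ${\mathbb R}_{exp}$ quoted above — inside a Wilkie-style model-completeness argument for the joint expansion. First I would recall that ${\mathbb R}_{an}$ is o-minimal: after adjoining a symbol for restricted division ($x/0:=0$) it admits quantifier elimination and is model complete (Denef--van den Dries), the geometric input being Gabrielov's theorem that complements of globally subanalytic sets are again subanalytic. In particular every ${\mathbb R}_{an}$-definable subset of ${\mathbb R}$ is a finite union of points and open intervals, and definable families of such sets have uniformly bounded numbers of connected components.

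Next, in the language $\mathcal L$ obtained from that of ${\mathbb R}_{an}$ by adjoining function symbols for $\exp$, $\log$, and the real power functions $x\mapsto x^r$, I would show that ${\mathbb R}_{an,exp}$ is model complete in $\mathcal L$; this is the heart of the matter and generalizes Wilkie's theorem from ``polynomials plus $\exp$'' to ``restricted analytic functions plus $\exp$''. The two ingredients are (i) a finiteness statement of Khovanskii/Pfaffian flavour — definable families of finite systems of equations built from restricted analytic functions and exponentials have uniformly bounded numbers of nondegenerate solutions, the restricted-analytic part coming from subanalytic geometry and the exponential part from a differential-algebraic (Noetherian-chain) argument — and (ii) an inductive normal-form argument eliminating the occurrences of $\exp$ and $\log$ one at a time, so that every $\mathcal L$-definable set becomes a coordinate projection of a quantifier-free $\mathcal L$-definable set. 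Model completeness together with (i) then forces every definable subset of ${\mathbb R}$ to have finitely many connected components, each a point or an open interval, which is exactly o-minimality. A more valuation-theoretic route, the one actually taken in \cite{MMV}, writes down instead an explicit axiomatization $T_{an,exp}$ (the theory of ${\mathbb R}_{an}$, the usual $\exp$ axioms, and Ressayre's axiom that $\exp$ surjects onto the positive elements), proves it complete by exhibiting the field of logarithmic-exponential power series as a model into which every model embeds via a back-and-forth through the value group and residue field of the natural valuation, and then extracts quantifier elimination, hence o-minimality, in the expansion of $\mathcal L$ by $\log$ and restricted division.

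The main obstacle in either approach is controlling the asymptotic interaction between $\exp$, $\log$ and the restricted analytic functions: the latter already encode all subanalytic behaviour, so one must verify both that this behaviour stays tame after composition with exponentials and logarithms — the finiteness in (i) — and that $\exp$ genuinely escapes it, i.e.\ that $\exp(x)$ eventually exceeds $f(x)$ for every ${\mathbb R}_{an}$-definable function $f$. This growth fact is what makes the elimination in (ii), or the back-and-forth in the semantic route, terminate, and establishing it cleanly requires a careful study of the Hardy field of germs at $+\infty$ of one-variable ${\mathbb R}_{an,exp}$-definable functions.
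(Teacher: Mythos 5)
This statement is not proved in the paper at all: it is imported as a black-box Fact with a citation to \cite{MMV}, exactly like the preceding Fact on ${\mathbb R}_{exp}$ cites Wilkie. So there is no ``paper proof'' to compare against; the only question is whether your text would stand on its own as a proof, and it does not. What you have written is an accurate roadmap of the two known arguments in the literature --- the Wilkie-style model-completeness route (carried out for ${\mathbb R}_{an,exp}$ by van den Dries and Miller, not in \cite{MMV} itself) and the axiomatic, valuation-theoretic route of \cite{MMV} with quantifier elimination in the language augmented by $\log$ --- but every load-bearing step is named rather than proved: the Khovanskii/Pfaffian finiteness for systems mixing restricted analytic functions with $\exp$, the model completeness itself, the completeness of the axiomatization $T_{an,exp}$, and the growth dichotomy making $\exp$ dominate all ${\mathbb R}_{an}$-definable (polynomially bounded) functions. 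These are precisely the content of the cited theorem, so as a ``proof'' the proposal is circular in the sense that it consists of citing the main results whose conjunction is the statement. For the purposes of this paper that is perfectly appropriate --- the Fact is meant to be quoted, not reproved --- but you should present it as a citation, not as an argument.

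Two small corrections if you keep the sketch: Ressayre's contribution to the axiomatization is the growth axiom scheme (roughly $x>n^2\Rightarrow \exp(x)>x^n$), while surjectivity of $\exp$ onto the positive elements is part of the basic exponential axioms; and the full logarithmic-exponential series field is constructed in a companion paper of van den Dries, Macintyre and Marker rather than in \cite{MMV}, where the completeness and quantifier-elimination arguments are run through valuation-theoretic embedding lemmas for models of $T_{an}$ with exponentiation.
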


\medskip

We now turn to groups definable in o-minimal structures.

The following two facts are  proved in \cite{Pi88}.
\begin{fact}\label{LieGroups}
Let $G$ be a group definable in an o-minimal structure. Then $G$ can
be equipped with a definable topological manifold structure so that the group
operations  are continuous. In particular,
if $G$ is definable in an o-minimal expansion of the real field
then $G$ is a Lie group.
\end{fact}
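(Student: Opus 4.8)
The plan is to follow (in essence) Pillay's argument: first equip $G$ with a definable manifold structure obtained by transporting the o-minimal topology of $M^n$ along a top-dimensional cell and its group translates, then deduce the topological-group axioms from generic continuity together with homogeneity, and finally obtain the Lie group statement over the real field from the solution of Hilbert's fifth problem.

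\emph{Setup and generic continuity.} Write $G$ as a definable set $X\subseteq M^m$ with definable multiplication $\mu\colon X\times X\to X$ and inversion $\iota\colon X\to X$, and put $n=\dim G$. By o-minimal cell decomposition $X$ partitions into cells; since $\dim G=n$ there is a cell $C$ of dimension $n$, and $C$ is definably homeomorphic, via some $\theta\colon C\to B$, to an open box $B\subseteq M^n$. Transport the topology of $B$ to $C$ and call it $\tau_C$. Since a definable function in an o-minimal structure is continuous off a definable set of strictly smaller dimension (cell decomposition applied to its graph), the discontinuity locus of $\mu$ — in the ambient topology, hence also with respect to the $\tau_C$-pieces — is definable of dimension $<2n$, and the discontinuity locus of $\iota$ has dimension $<n$.

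\emph{A definable, translation-invariant topology.} Call $g\in G$ \emph{generic} if it lies in no definable subset of $G$ of dimension $<n$ over the parameters defining $G$; equivalently, $\dim(g/\emptyset)=n$ in a sufficiently saturated elementary extension. For generic $g$ the fiber over $g$ of the discontinuity locus of $\mu$ has dimension $<n$, so after shrinking $C$ to a still $n$-dimensional definable subset the map $x\mapsto g\cdot x$ carries $C$ homeomorphically (for the $\tau_C$ topologies) onto its image; and for any $a\in G$ and $g$ generic over $a$, the element $g^{-1}a$ is generic over $a$, hence avoids the smaller-dimensional set $G\setminus C$, so $a\in g\cdot C$. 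Thus the generic translates $g\cdot C$ cover $G$. Declare $O\subseteq G$ open iff $(x\mapsto g\cdot x)^{-1}(O)\cap C$ is $\tau_C$-open for every generic $g$. I would then verify that this is a topology, that it is independent of the choice of $C$, and — the delicate point — that it is \emph{definable}: for this one produces a single definable family $\{g\cdot C: g\in D\}$ ($D\subseteq G$ definable) of generic translates covering $G$ and observes that the topology, and the manifold structure below, is determined by this definable family of charts together with the definable transition maps among them, all the relevant continuity requirements being first-order.

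\emph{Topological group, manifold, and the Lie conclusion.} With this topology each $\theta\colon C\to B$ is a homeomorphism onto an open subset of $M^n$, so $G$ is a definable topological manifold of dimension $n$. To get continuity of $\mu$ and $\iota$ at an arbitrary point, write the point as a product of sufficiently generic data: in the charts around the points involved (each a generic translate of $C$) the operation becomes a composition of homeomorphisms with the restriction of $\mu$ (resp. $\iota$) to a neighborhood of a generic pair (resp. point), where it is continuous by the first step. Finally, if $M$ expands the real field then $B\subseteq\RR^n$, so $G$ is a topological group whose underlying space is a topological manifold, and by the Montgomery--Zippin--Gleason solution of Hilbert's fifth problem it carries a (unique compatible) Lie group structure; alternatively — which has the advantage of also making the smooth structure definable — one runs the same construction with $C^k$-cell decomposition for every $k$, so that all charts and transition maps are $C^k$ and the resulting $C^\infty$ atlas has $\mu,\iota$ definable, hence $C^k$ for all $k$, hence smooth. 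The main obstacle is not any single estimate but the bookkeeping needed to make the translation-invariant topology and atlas genuinely definable and choice-independent; granting that, the group axioms and the manifold property drop out of generic continuity and homogeneity.
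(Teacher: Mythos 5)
The paper offers no proof of this Fact at all: it is quoted from Pillay's paper \cite{Pi88}, so the only meaningful comparison is with Pillay's original argument, and your sketch does follow that strategy (a large cell with the induced $M^n$ topology, generic continuity of the definable group operations, translated charts, homogeneity, and then either Montgomery--Zippin or $C^k$ cell decomposition for the Lie/smooth conclusion over $\mathbb{R}$).

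There is, however, a genuine gap exactly at the point you flag and then set aside, and it is the heart of the theorem. Your topology is defined by quantifying over \emph{generic} $g$, but genericity is not a first-order condition on $g$, so definability of the topology and of the atlas of translated charts does not follow from the definition as given; worse, in an o-minimal expansion of the real field whose language is uncountable (for instance $\mathbb{R}_{an,exp}$, which is the kind of structure this paper needs the Fact for), elements of $G$ generic over the relevant parameters need not exist in the model at all, since $G$ can be covered by continuum many parameter-definable subsets of smaller dimension. The standard repair, which is what Pillay actually does, is to eliminate the word ``generic'' from the construction: after shrinking $C$ to a large definable set on which the operations behave well, one uses largeness (so that, e.g., any two large subsets of $G$ meet, whence $G=C\cdot C$ and the translates $b\cdot C$ with $b$ ranging over the \emph{definable} set $C$ already cover $G$) to produce a definable family of charts, and one phrases openness by quantifying over \emph{all} group elements, which is a definable condition; the genericity arguments are carried out in a sufficiently saturated elementary extension and the resulting definable data transferred back. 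Without such a device the key assertion of the Fact --- that the manifold structure is \emph{definable} --- is not established. Granting that step, your continuity-at-arbitrary-points argument via translation and your two routes to the Lie structure (Hilbert's fifth problem, or $C^k$ cell decomposition giving a definable $C^k$ atlas, which is Pillay's route and also yields smoothness of the operations) are sound.
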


Using the above fact we will always view  groups definable in
o-minimal structures as definable topological groups.

\begin{fact}\label{fact:defsubgr}
Let $G$ be a group definable in an o-minimal structure.
If $H<G$ is a definable subgroup then $H$ is closed in $G$.
In particular,
if $G$ is definable in an o-minimal expansion of the real field and
$H<G$ is a definable subgroup then $H$ is  a Lie subgroup of $G$.
\end{fact}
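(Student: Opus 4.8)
The plan is to prove that $H$ is closed in $G$ by showing that its topological closure $\overline{H}$, taken with respect to the definable manifold topology of Fact~\ref{LieGroups}, is equal to $H$.

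First I would check that $\overline{H}$ is again a definable subgroup of $G$. It is definable because the manifold topology on $G$ comes with a definable basis of open sets, so the closure operator sends definable sets to definable sets; and it is a subgroup because $G$ is a topological group in this topology and, in any topological group, the closure of a subgroup is a subgroup (continuity of multiplication gives $\overline{H}\cdot\overline{H}\subseteq\overline{H\cdot H}=\overline{H}$, and continuity of inversion gives $(\overline{H})^{-1}=\overline{H}$).

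The heart of the argument is then a dimension count. By the frontier inequality of o-minimal dimension theory, the frontier $\overline{H}\setminus H$ is a definable set of dimension strictly less than $\dim H$. Suppose toward a contradiction that $H\neq\overline{H}$ and fix $g\in\overline{H}\setminus H$. The left coset $gH$ is a definable subset of $\overline{H}$, and since $g\notin H$ we have $gH\cap H=\varnothing$, so $gH\subseteq\overline{H}\setminus H$; on the other hand, left translation by $g$ is a definable bijection from $H$ onto $gH$, so $\dim(gH)=\dim H$. Hence $\dim H=\dim(gH)\leq\dim(\overline{H}\setminus H)<\dim H$, a contradiction. Therefore $\overline{H}=H$, and $H$ is closed in $G$. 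For the final assertion, if moreover $G$ is definable in an o-minimal expansion of the real field, then $G$ is a Lie group by Fact~\ref{LieGroups}, and a closed subgroup of a Lie group is an embedded Lie subgroup by Cartan's closed-subgroup theorem.

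I expect the only delicate point to be the topological bookkeeping: one must be careful that ``closed'' refers to the definable manifold topology of Fact~\ref{LieGroups}, that this topology indeed has a definable basis so that closures of definable sets remain definable, and that the frontier inequality --- usually stated for definable subsets of $M^{n}$ with the order topology --- is applied correctly here, for instance through a finite definable atlas for $G$. Once these points are settled, the coset-versus-frontier dimension count finishes the proof immediately.
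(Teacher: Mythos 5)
Your proof is correct, and it is essentially the standard argument: the paper does not prove this Fact but cites Pillay \cite{Pi88}, where the closedness of definable subgroups is established by exactly your route --- the closure is a definable subgroup, the frontier inequality bounds $\dim(\overline{H}\setminus H)$ below $\dim H$, and any coset $gH$ with $g\in\overline{H}\setminus H$ would violate that bound --- with the topological bookkeeping you flag (the definable manifold topology agreeing with the ambient topology on a large definable set, so the frontier inequality applies chartwise) handled there as well. The final step via Cartan's closed-subgroup theorem is likewise the intended reading of the ``in particular'' clause, so nothing is missing.
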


The following is Theorem 1.2 in \cite{PeSte}.

\begin{fact}\label{existence of o-minimal}
If  $G$ is a torsion-free group definable in an o-minimal structure
then $G$ contais a definable one-dimensional  subgroup.
\end{fact}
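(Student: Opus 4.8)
The plan is to proceed by induction on $n=\dim G$, reducing first to the case where $G$ is definably connected, and then to the case where $G$ has no proper nontrivial definable subgroup at all; a centralizer argument will force such a $G$ to be abelian, and the remaining task --- producing a one-dimensional definable subgroup of an abelian torsion-free definable group of dimension $\geq 2$ --- is where I expect the real difficulty to lie.

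First I would replace $G$ by its identity component $G^{0}$, which is a definable torsion-free subgroup and has the property that any one-dimensional definable subgroup of it is also one of $G$; so we may assume $G$ is definably connected and nontrivial, whence $n\geq 1$. The case $n=1$ is immediate, taking the subgroup to be $G$ itself. For $n\geq 2$ it suffices to exhibit \emph{some} proper nontrivial definable subgroup $K<G$: indeed such a $K$ is torsion-free and infinite (a nontrivial torsion-free group is infinite), and $\dim K<n$ --- for $\dim K=n$ would make $K$ of finite index and hence force $K\supseteq G^{0}=G$ --- so the induction hypothesis applied to $K$ yields a one-dimensional definable subgroup of $K$, which is one of $G$. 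Thus the statement reduces to showing that every definably connected torsion-free definable group $G$ with $\dim G\geq 2$ has a proper nontrivial definable subgroup.

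Suppose, for a contradiction, that $G$ is such a group with no proper nontrivial definable subgroup. Then for each $g\neq e$ the centralizer $C_{G}(g)$ is a definable subgroup containing $g$, hence equals $G$, so $g\in Z(G)$; therefore $G$ is abelian. Note also that $G$ is divisible: for a positive integer $m$, multiplication by $m$ is injective (by torsion-freeness) and its image is a definable subgroup of dimension $n$, hence all of the definably connected group $G$. Now pick a nonconstant definable curve $\gamma\colon(0,1)\to G$ with $\gamma(t)\to e$ as $t\to 0^{+}$; the subgroup it generates is definable (the subgroup generated by a definable subset of a definable group is definable), and if it is proper we already have our contradiction, so we may assume $\gamma$ generates $G$.

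It remains to manufacture from $\gamma$ a one-dimensional definable subgroup of $G$ --- equivalently, a nonconstant definable one-parameter subgroup, a definable homomorphism from a one-dimensional definable group into $G$ --- whose image, being proper since $\dim G\geq 2$, then contradicts our assumption. This is the step I expect to be the main obstacle. The natural strategy is to form a definable limit of suitably reparametrized or rescaled translates of $\gamma$ near one of its endpoints, and to check --- using o-minimal finiteness to see that the limit exists and the (abelian) group law to see that it is a homomorphism --- that what results is the desired one-parameter subgroup. The delicate point is the existence of this limit: unlike over $\RR$, a group definable in an arbitrary o-minimal structure need not carry a definable exponential map, so the one-parameter subgroup cannot simply be produced by exponentiating a tangent direction and must instead be coaxed out of $\gamma$ by a limiting procedure controlled purely by o-minimality. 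Alternatively, one may invoke the structure theory of definable abelian groups in o-minimal structures, which yields proper definable subgroups directly.
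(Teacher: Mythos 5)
This statement is not proved in the paper at all: it is quoted as Theorem 1.2 of Peterzil--Steinhorn \cite{PeSte}, so the only fair comparison is with that source, and against it your proposal has a genuine gap. Your reductions (passing to the definably connected component, induction on dimension, the centralizer argument showing that a minimal counterexample is abelian) are fine, but they only relocate the problem; the entire content of the theorem is the step you explicitly leave open, namely extracting a definable one-parameter subgroup from a definable curve in an abelian torsion-free group of dimension at least $2$. Saying that the limit of rescaled translates of $\gamma$ ``must be coaxed out by a limiting procedure controlled purely by o-minimality'' is a description of the difficulty, not an argument: in an arbitrary o-minimal structure there is no exponential map and no ambient analytic structure, and making such a limit exist and be a homomorphism with one-dimensional definable image is precisely the hard construction in \cite{PeSte} (a ``tangent at infinity'' of a curve leaving every definably compact set, built using definable compactness and a delicate analysis of the group chunk near the endpoint). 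Without that construction, or an appeal to the structure theory of definable abelian groups which itself rests on results of comparable depth, the proof is incomplete.

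There is also a concrete false step in the reduction: you assert that the subgroup generated by a definable subset of a definable group is definable. This fails already for the subgroup generated by a single nonzero element of $(\RR,+)$, which is $\mathbb{Z}$; in general such subgroups are only countable unions of definable sets. So the dichotomy ``either $\gamma$ generates a proper definable subgroup or $\gamma$ generates $G$'' is not justified, and the case analysis built on it collapses. Since $G$ was assumed to have no proper nontrivial \emph{definable} subgroup, a proper but non-definable subgroup generated by $\gamma$ yields no contradiction. Both issues would have to be repaired, and the second is minor compared with the first: the missing one-parameter-subgroup construction is the theorem.
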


The following fact follows from  Corollary 2.15 in \cite{Pi88}.
\begin{fact}\label{fact:one-dim}
 If $G$ is a definably connected one-dimensional group definable in an
 o-minimal structure then $G$ is abelian.
\end{fact}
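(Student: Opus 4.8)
The plan is to reduce the whole statement to the dimension theory of definable groups from \cite{Pi88}. The key observation is that, since $G$ is definably connected with $\dim G = 1$, every definable subgroup $H \le G$ is either finite or equal to $G$: by o-minimal dimension theory either $\dim H = 0$, in which case $H$ is finite, or $\dim H = 1 = \dim G$, in which case $H$ has finite index in $G$; and a finite-index definable subgroup of a definably connected group must be the whole group, because (using that definable subgroups are closed, Fact~\ref{fact:defsubgr}, and that translations are homeomorphisms) its finitely many cosets would partition $G$ into nonempty definable clopen pieces, contradicting definable connectedness. Applying this to the definable subgroup $C_G(g) = \{x\in G : xg=gx\}$ for $g \in G$, we obtain that for every $g$ either $C_G(g)$ is finite or $g \in Z(G)$.

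Next I would show that every element of infinite order is central: if $g$ has infinite order then $\langle g\rangle$ is an infinite subgroup of $C_G(g)$, so $C_G(g)$ is infinite, hence $C_G(g)=G$ and $g\in Z(G)$. Now suppose, toward a contradiction, that $G$ is not abelian. Then $Z(G)$ is a proper definable subgroup of $G$, hence finite; but every element of infinite order lies in $Z(G)$, and a finite group consists of elements of finite order, so in fact $G$ has no element of infinite order at all. Thus $G$ would be an infinite torsion group definable in an o-minimal structure.

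The final step is to rule this out, and this is where o-minimality is genuinely used: an infinite group definable in an o-minimal structure always contains an element of infinite order. The statement is of course false for abstract groups (witness $\mathbb{Q}/\mathbb{Z}$), so some structure theory is unavoidable, and this is exactly what \cite[Corollary~2.15]{Pi88} and the surrounding dimension theory supply — concretely one can produce a one-dimensional definable subgroup and analyse it according to whether it is definably compact. This contradiction forces $Z(G)=G$, i.e.\ $G$ is abelian. I expect the main obstacle to be precisely this last point: the reductions in the first two paragraphs are soft group theory combined with the subgroup-dimension dichotomy, while the nonexistence of infinite definable torsion groups is the real content, and it is this that the Fact borrows from \cite{Pi88}.
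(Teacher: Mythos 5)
Your first two paragraphs are fine: the dichotomy ``every definable subgroup of $G$ is finite or all of $G$'' is standard (indeed, once $\dim H=\dim G$ you can skip the finite-index step, since $H$ then has nonempty interior, is open, hence clopen, and definable connectedness gives $H=G$ directly), and the reduction showing that a nonabelian counterexample would have to be an infinite torsion group is correct soft group theory. But the last step is a genuine gap, and it is exactly where you yourself locate ``the real content.'' The claim that every infinite group definable in an o-minimal structure has an element of infinite order is not proved in your argument; it is justified by appealing to \cite[Corollary 2.15]{Pi88} ``and the surrounding dimension theory,'' which is circular here, because the Fact you are asked to prove \emph{is} what the paper extracts from that corollary (the paper gives no independent proof, it simply quotes Pillay). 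So as written your proof assumes its conclusion.

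If instead you try to prove the no-infinite-torsion claim by the suggested dichotomy on definable compactness, only one branch is easy: if $G$ is not definably compact, the full Peterzil--Steinhorn theorem from \cite{PeSte} produces a one-dimensional torsion-free definable subgroup, which by your dichotomy is all of $G$, contradicting torsion. The definably compact branch is the problem: nothing in dimension theory rules out a definably compact one-dimensional definably connected group all of whose elements are torsion (abstractly a $\mathbb{Q}/\mathbb{Z}$-like group). Note that cardinality or Lie-theoretic shortcuts are unavailable: the Fact is stated for arbitrary o-minimal structures, where the universe may be a countable real closed field, torsion is not a first-order condition so one cannot pass to a saturated model, and the known structural facts that would exclude torsion (divisibility and the torsion structure of one-dimensional groups, or torsion-point theorems for definably compact groups) either presuppose commutativity of one-dimensional groups or are much deeper, later results. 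So the argument needs a genuinely new idea precisely at the point you flagged, and at present it does not constitute a proof of the Fact.
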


Using the following  ``definable choice'' for groups,  proved  by M.~
Edmundo (see \cite[Theorem 7.2]{Ed}), we will always view quotients of
groups definable in an o-minimal structures as definable objects.

\begin{fact}\label{PillayImaginaries}
Let $G$ be a  group definable in an o-minimal structure and let $\{T(x)\mid x\in X\}$
be a definable family of non empty definable subsets of $G$. Then there is a
definable function $t:X\rightarrow G$ such that for all $x,y\in X$,
we have $t(x)\in T(x)$ and if $T(x)=T(y)$ then $t(x)=t(y)$.
\end{fact}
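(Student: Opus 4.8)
The plan is to prove the statement by induction on $n=\dim G$, using throughout that ``definable'' allows parameters, so that the task is only to produce \emph{some} parameter-definable $t$ that works uniformly in the given family. A harmless first reduction: since $x\mapsto\dim T(x)$ takes finitely many values and each level set $\{x:\dim T(x)=k\}$ is definable, I may assume $\dim T(x)=d$ is constant on $X$.

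The case $d=0$ (which in particular settles the base case $\dim G=0$) is immediate: each $T(x)$ is then finite of uniformly bounded size, and $G$, being a definable set, is a definable subset of some $M^{m}$. Taking $t(x)$ to be the lexicographically least element of $T(x)$ for the order that $M^{m}$ inherits from the linear order of $M$ gives a definable function that visibly depends only on $T(x)$.

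So assume $d\geq1$, hence $n\geq1$, and fix a one-dimensional definable subgroup $A\le G$; such an $A$ exists because $G$ is infinite (by Fact~\ref{existence of o-minimal} when $G$ is torsion-free, and in general by the structure theory of definable groups in o-minimal structures). I would then proceed in two steps. Step~(i): push forward to the coset space $G/A$, a definable homogeneous space of dimension $n-1$ with a transitive definable $G$-action; applying the inductive hypothesis — which I state for all definable groups and definable homogeneous spaces of dimension $<n$ — to the definable family $\{\pi(T(x))\}$ of nonempty subsets of $G/A$, I obtain a definable selection of a single coset $C(x)=g(x)A\subseteq G$ depending only on $T(x)$. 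Step~(ii): select a point of the nonempty definable set $T(x)\cap C(x)$ inside the one-dimensional coset $C(x)$.

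Step~(ii) is where the real work lies, and I expect it to be the main obstacle. The coset $C(x)$ is a torsor over the one-dimensional group $A$, which is abelian by Fact~\ref{fact:one-dim}; a torsor has no canonical point, so one must use the fine structure of one-dimensional definable groups together with a fixed auxiliary parameter. If $A$ is not definably compact it carries a definable linear order making it a divisible ordered abelian group; transporting this order to $C(x)$, I would take the least element of $T(x)\cap C(x)$ when it exists, and otherwise translate the (finite) endpoint of the relevant piece by a once-and-for-all fixed positive element of $A$ — here the parameter is genuinely needed, as there is no parameter-free choice of a point in a half-line such as $(a,+\infty)\subseteq(\RR,+)$. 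If $A$ is definably compact I would first cut $A$, hence each coset, along a canonical nontrivial finite set — for instance the $p$-torsion subgroup $A[p]$ for the least prime $p$ with $A[p]\neq\{e\}$, which is finite and nonempty since a definably compact definable group has nontrivial torsion — thereby reducing to finitely many arcs on which the ordered-group argument applies. Two bookkeeping points remain: the final value must depend only on $T(x)$, which holds because $C(x)$ and all auxiliary data are chosen canonically from $T(x)$; and the mild circularity in treating $G/A$ as a definable object when $A$ is not normal must be handled, which I would absorb into the induction on dimension (or, when the normal core of $A$ is nontrivial, by first quotienting by it). The hardest part, I expect, is exactly the uniform, family-definable analysis of one-dimensional definable groups underlying Step~(ii): separating the definably compact and non-definably-compact cases and, in the latter, transporting the orders to the cosets uniformly while preserving dependence only on $T(x)$.
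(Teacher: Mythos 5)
The paper itself gives no proof of this statement: it is imported verbatim from Edmundo \cite{Ed}*{Theorem 7.2}, and the authors use it precisely in order to treat quotients of definable groups as definable objects. Your sketch therefore has to stand on its own, and as it stands it is circular at the decisive point. In Step~(i) you pass to the coset space $G/A$ of a (not necessarily normal) one-dimensional definable subgroup and treat it as a definable homogeneous space of dimension $n-1$. In an arbitrary o-minimal structure there is no elimination of imaginaries, and the only way to realize $G/A$ as a definable set is to produce a definable selection of representatives for the definable family of cosets $\{gA : g\in G\}$ --- which is exactly an instance of the statement being proved, for the group $G$ itself, of the \emph{same} dimension $n$. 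So it cannot be ``absorbed into the induction on dimension'' (the inductive hypothesis only covers dimension $<n$), and quotienting by the normal core of $A$ does not help, since that core will typically be trivial (and even for normal $A$, making $G/A$ a definable group is usually justified by this very Fact). Moreover, even granting a definable $G/A$, your induction hypothesis is now about homogeneous spaces, and your inductive step (pick a one-dimensional subgroup, quotient) is only formulated for groups, so a further argument would be owed there as well.

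The one place your plan could have broken this circle is Step~(ii) applied to a full coset (take $T$ equal to the coset itself), and that is exactly the case your Step~(ii) cannot handle. If $A$ is not definably compact, each coset does inherit a canonical dense linear order without endpoints, but when $T(x)\cap C(x)$ is the whole coset (or an interval with no finite endpoint on the relevant side) there is no least element and no finite endpoint to translate by your fixed $\varepsilon\in A$: a torsor over an ordered divisible group has no distinguished point, and a parameter in $A$ lets you move away from a marked point but does not create one. In the definably compact case the proposal ``cut each coset along $A[p]$'' is not meaningful: $A[p]$ is a finite subset of $A$, not of the coset, and an $A$-torsor has neither torsion points nor a canonical finite subset (nor a canonical orientation), so again nothing is available when $T(x)\cap C(x)=C(x)$. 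Thus the genuinely hard content of the Fact --- a uniform, representative-independent choice of a point in each coset of a one-dimensional subgroup --- is assumed rather than proved in your argument; this is precisely the content of Edmundo's theorem, and the routine parts of your sketch (constant fiber dimension, lexicographic choice in the finite case, dependence only on $T(x)$) do not touch it.
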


In the following fact we collect properties of definable torsion-free
groups that we will need in this paper.  For proofs we refer to
\cite{Ed}  and also to \cite{PeSt05}.

\begin{fact}\label{fact:tor-free}
  Let $\mathcal M$ be an o-minimal structure and $G$ a torsion-free group
  definable in $\mathcal M$.
  \begin{enumerate}
  \item $G$ is definably connected and solvable. More precisely,  there are \emph{definable}
    subgroups
 \[ G=G_n> G_{n-1} > \dotsb > G_0=\{ e\} \]
such that $G_i$ is normal in $ G_{i+1}$ and the group $G_{i+1}/G_i$ is
a \emph{torsion-free} abelian group for $i=0,\dotsc,n-1$.
\item  There is a definable normal subgroup $H< G$ such that  the
  group $G/H$ is one-dimensional (hence abelian).
\item If $H<G$ is a definable normal subgroup then the group $G/H$ is
  also torsion-free.
\item If $\mathcal M$ is an expansion of a real closed field then $G$
  is definably diffeomorphic to $M^n$.
  \end{enumerate}
\end{fact}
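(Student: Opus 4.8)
The plan is to run everything off an induction on $n=\dim G$, the case $n=0$ being immediate since a finite torsion-free group is trivial. The two external inputs doing the real work are Fact~\ref{existence of o-minimal}, which produces a definable one-dimensional subgroup of any positive-dimensional torsion-free definable group, and Fact~\ref{fact:one-dim}, by which such a subgroup (after passing to its definably connected component) is abelian; throughout, quotients by definable subgroups are treated as definable groups via definable choice (Fact~\ref{PillayImaginaries}). The content of the statement is concentrated in parts~(1) and~(3); parts~(2) and~(4) are bootstrapped from these, and all four are proved together in a single induction on $n$, so that the full conclusion is available for torsion-free definable groups of smaller dimension.

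First I would show $G$ is \emph{definably connected}. This is cleanest via the Euler characteristic: a torsion-free definable group has Euler characteristic $\pm1$ (Strzebonski; see \cite{Ed}), and since $G$ is a disjoint union of $[G:G^0]$ left translates of $G^0$ one has $E(G)=[G:G^0]\cdot E(G^0)$, forcing $[G:G^0]=1$. Granting this, part~(3) follows by reduction to a cyclic quotient: if $H$ is a proper definable normal subgroup and $G/H$ had an element of prime order $p$, then the preimage of the cyclic subgroup it generates is a definable subgroup $K$ in which $H$ is normal of index $p$; but $K$ is torsion-free with $\dim K=\dim H<\dim G$, so by the inductive hypothesis $K$ is definably connected, hence has no proper definable subgroup of finite index — contradiction. (Here one uses that a proper definable normal subgroup of a definably connected definable group has strictly smaller dimension.)

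Part~(2) is the heart of the matter. Within the induction on $\dim G$: if $G$ has a proper nontrivial definable normal subgroup $M$, then $\dim M<\dim G$, the quotient $G/M$ is torsion-free by~(3), definably connected and of smaller dimension, so by induction it has a definable normal subgroup with one-dimensional quotient, whose preimage in $G$ is the desired $H$. If instead $G$ has no proper nontrivial definable normal subgroup — i.e.\ $G$ is definably simple — then $G$ must be one-dimensional, because a definably simple definable group of dimension $\ge2$ is, by the classification of definably simple groups in o-minimal structures (\cite{PPS00}), definably isomorphic to a semialgebraic simple (hence torsion-containing) group, contradicting torsion-freeness; and a one-dimensional $G$ is itself the required quotient. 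This last point is the main obstacle: everything genuinely model-theoretic is concentrated in the fact that a torsion-free definable group cannot be definably simple unless one-dimensional — equivalently, exhibits no semisimple behaviour — and this rests on the o-minimal structure theory of \cite{PPS00}, \cite{PeSt05} and \cite{Ed}, not on formal group theory.

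The refined series in part~(1) and the diffeomorphism in part~(4) are then bookkeeping. For~(1): with a proper definable normal $N\triangleleft G$ such that $G/N$ is one-dimensional (hence abelian by Fact~\ref{fact:one-dim} and torsion-free by~(3)), and $N$ torsion-free of smaller dimension, apply the inductive hypothesis to $N$ and prepend $G$ to obtain $G=G_n>\dots>G_0=\{e\}$ with $G_i$ normal in $G_{i+1}$ and every quotient torsion-free abelian. For~(4), assume $\mathcal M$ expands a real closed field and use the extension $1\to N\to G\to G/N\to1$ with quotient map $\pi\colon G\to G/N$: by induction $N$ is definably diffeomorphic to $M^{n-1}$, while $G/N$ is a one-dimensional, torsion-free, definably connected definable group over a real closed field, hence not the circle group, hence definably diffeomorphic to $(M,+)$ (for the multiplicative line $(M^{>0},\cdot)$ one writes down an explicit definable diffeomorphism onto $M$); taking a definable section $s$ of $\pi$ from Fact~\ref{PillayImaginaries} and correcting it on a lower-dimensional set to make it $C^1$, the map $g\mapsto\bigl(g\,s(\pi(g))^{-1},\,\pi(g)\bigr)$ is a definable diffeomorphism $G\to N\times(G/N)\cong M^{n-1}\times M=M^n$ — the definable counterpart of Fact~\ref{fact:tor-free-simply}(3).
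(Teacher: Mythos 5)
This statement is quoted in the paper as a \emph{Fact}, with the proofs delegated to \cite{Ed} and \cite{PeSt05}; the paper contains no argument of its own, so your proposal can only be judged on its internal correctness. Its overall architecture (Euler characteristic for definable connectedness, induction on dimension, reduction of the definably simple case via \cite{PPS00}, pulling back a one-dimensional quotient) is sound and close in spirit to the cited literature. Two points need repair, though. First, a small one in part (2): ``no proper nontrivial definable normal subgroup'' is not the same as ``definably simple'' in the sense of \cite{PPS00}, which concerns non-abelian groups; you must separately exclude the case of an abelian $G$ of dimension $\ge 2$ with no proper nontrivial definable subgroup. This is exactly where Fact~\ref{existence of o-minimal} (which you list as a key input but never actually invoke in the argument) is needed: it supplies a one-dimensional definable subgroup, which is automatically normal in the abelian case. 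With that, the trichotomy (nontrivial proper center, genuinely definably simple, abelian) is complete.

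The more serious gap is in part (4), at the step ``taking a definable section $s$ of $\pi$ \dots and correcting it on a lower-dimensional set to make it $C^1$.'' Changing the values of a definable section on a lower-dimensional (here finite) subset of the one-dimensional base cannot produce continuity, let alone differentiability: at a bad point the one-sided limits of $s$ in the fibre need not exist at all, since the section can escape to infinity inside the fibre $N$. What is actually needed is a gluing argument: $\pi\colon G\to G/N$ is a definable $C^1$ submersion (after fixing definable $C^1$ manifold structures making the operations differentiable), so definable local $C^1$ sections exist near every point of the base; one then patches finitely many of them along the interval $G/N$, using that consecutive local sections differ by a definable $C^1$ map into $N$, which by the inductive hypothesis is definably diffeomorphic to $M^{n-1}$ and hence allows a definable $C^1$ cutoff interpolation between such a map and the constant $e$. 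Without some such argument (or an appeal to \cite{PeSt05}, where the triviality of this fibration is the substantive content of the homeomorphism $G\cong M^n$), the map $g\mapsto\bigl(g\,s(\pi(g))^{-1},\pi(g)\bigr)$ is only a definable bijection, not a diffeomorphism. Parts (1)--(3), including the Euler-characteristic step (Strzebonski's theorem that torsion-free definable groups have Euler characteristic $\pm1$, which is not among this paper's references and should be cited explicitly), are otherwise fine.
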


By analogy with simply connected Lie groups we define definably completely solvable groups.

\begin{definition}\label{completely solvableDef}
A torsion-free group $G$ definable in an o-minimal structure  is
called  \emph{definably completely solvable} if there exist a sequence of definable subgroups
$G=G_n > G_{n-1} > \dots  > G_0=\{e\}$ such that each $G_i$ is normal
in $G$ and $G_{i+1}/G_{i}$ is a one-dimensional  group.
\end{definition}
\begin{remark} It follows from Facts \ref{fact:one-dim} and
  \ref{fact:tor-free} that in the  above definition all groups
  $G_{i+1}/G_{i}$ are abelian and torsion-free.

\end{remark}

\section{Solvable connected torsion-free  Lie groups are definable in ${\mathbb R}_{exp}$}
\label{completely solvable}

In this section, we will prove that
any connected torsion-free   solvable Lie group of finite dimension is isomorphic (as a Lie group) to a
group definable in ${\mathbb R}_{exp}$.

In order to prove this we will need the following lemma about completely
solvable  Lie algebras
over $\mathbb R$.

\begin{lemma}\label{Dixmier2}
Let $\fg$ be a solvable finite dimensional   Lie algebra over  $\mathbb R$. The following are equivalent.
\begin{enumerate}
\item $\fg$ is completely solvable.
\item For any $\xi\in \fg$ all  eigenvalues of the linear operator $ad(\xi)$ are in $\mathbb R$.
\item $\fg$ is isomorphic to a subalgebra of the upper triangular matrices $t_n(\mathbb R)$ for some $n\in \mathbb N$.
\end{enumerate}
\end{lemma}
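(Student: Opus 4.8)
The plan is to prove the cycle of implications $(1)\Rightarrow(3)\Rightarrow(2)\Rightarrow(1)$, since the first two are essentially Lie's theorem with bookkeeping and the last is the substantive direction.

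For $(1)\Rightarrow(3)$: a complete flag of ideals $\fg = \fg_n > \dotsb > \fg_0 = 0$ with $\dim\fg_i = i$ gives, after choosing a basis $e_1,\dotsc,e_n$ adapted to the flag (so $e_1,\dotsc,e_i$ span $\fg_i$), an embedding of $\fg$ into $\gl_n(\RR)$ via the adjoint-type action on the flag — more precisely, consider the representation of $\fg$ on $\fg$ itself by $\mathrm{ad}$ together with a one-dimensional trivial summand, or simply note that each $\mathrm{ad}(\xi)$ preserves every $\fg_i$, hence is upper triangular in the adapted basis; this map $\xi\mapsto\mathrm{ad}(\xi)$ has kernel the center, so to get a faithful representation one passes to $\fg\ltimes\RR$ acting on the flag extended by one dimension, or invokes Ado's theorem and then refines the resulting faithful representation using the fact that a completely solvable Lie algebra acting on a finite-dimensional real vector space can be put in simultaneous upper-triangular form (the real analogue of Lie's theorem, valid precisely because all the relevant eigenvalues — which are the weights — are real for completely solvable algebras). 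I would present the clean version: first establish that $\fg$ completely solvable implies every finite-dimensional real representation is triangularizable, then apply this to a faithful representation from Ado's theorem.

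For $(3)\Rightarrow(2)$: if $\fg\subseteq t_n(\RR)$, then for $\xi\in\fg$ the operator $\mathrm{ad}(\xi)$ acts on $\fg$, and since $\xi$ is upper triangular, $\mathrm{ad}(\xi) = L_\xi - R_\xi$ where $L_\xi, R_\xi$ are left/right multiplication by $\xi$ on $\gl_n(\RR)$; the eigenvalues of $L_\xi$ and $R_\xi$ are among the diagonal entries of $\xi$ (which are real), and since $L_\xi, R_\xi$ commute, the eigenvalues of $\mathrm{ad}(\xi)$ on $\gl_n(\RR)$, hence on the invariant subspace $\fg$, are differences of diagonal entries of $\xi$ — in particular real.

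For $(2)\Rightarrow(1)$, the main obstacle: assume every $\mathrm{ad}(\xi)$ has only real eigenvalues; we must produce a complete flag of ideals. I would argue by induction on $\dim\fg$. Since $\fg$ is solvable, $[\fg,\fg]$ is a proper ideal; by Lie's theorem applied over $\mathbb{C}$ to $\mathrm{ad}$, there is a one-dimensional subspace of $\fg/[\fg,\fg]$ invariant under everything, but more to the point one wants a one-dimensional ideal $\fa$ of $\fg$. The standard move: pass to the nilradical $\mathfrak{n}$; by Lie's theorem $\fg$ acting on $\mathfrak{n}$ (via $\mathrm{ad}$, complexified) has a common eigenvector $v$, i.e. $[\xi,v] = \lambda(\xi)\,v$ for a complex linear functional $\lambda$ on $\fg$. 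Hypothesis $(2)$ forces $\lambda$ to be real-valued (it is an eigenvalue of $\mathrm{ad}(\xi)$), so $\RR v$ is a real one-dimensional ideal $\fa$. Then $\fg/\fa$ is solvable of smaller dimension, and one checks the hypothesis $(2)$ is inherited: for $\bar\xi\in\fg/\fa$, the eigenvalues of $\mathrm{ad}(\bar\xi)$ on $\fg/\fa$ are a sub-multiset of those of $\mathrm{ad}(\xi)$ on $\fg$, hence real. By induction $\fg/\fa$ has a complete flag of ideals, whose preimages together with $0 < \fa$ give a complete flag of ideals in $\fg$. The delicate point to get right is the existence of the common eigenvector with a real weight and the reduction step — using the nilradical (rather than $[\fg,\fg]$) ensures $\fg$ acts on it and Lie's theorem applies; care is needed that $\mathfrak{n}\neq 0$ when $\fg\neq 0$, which holds for nonzero solvable $\fg$.
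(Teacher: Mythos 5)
Your reduction to the cycle $(1)\Rightarrow(3)\Rightarrow(2)\Rightarrow(1)$ is sensible, and two of the three legs are sound. The argument for $(3)\Rightarrow(2)$ via $\mathrm{ad}(\xi)=L_\xi-R_\xi$ with $L_\xi,R_\xi$ commuting is correct, and your proof of $(2)\Rightarrow(1)$ (Lie's theorem applied to the complexified action of $\fg$ on its nilradical, realness of the weight forced by $(2)$, then induction on $\fg/\fa$) is essentially the standard argument; the paper simply cites \cite[Corollary 1.30]{Knapp} for the equivalence of $(1)$ and $(2)$, so reproving it is fine, modulo the small point that the common eigenvector produced by Lie's theorem lives in the complexification and you must pass to its real or imaginary part (both are eigenvectors once the weight is known to be real).

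The genuine gap is in $(1)\Rightarrow(3)$, which is exactly the implication the paper singles out as the one needing a real proof. The ``fact'' your clean version rests on --- that every finite-dimensional real representation of a completely solvable Lie algebra is simultaneously triangularizable over $\RR$ --- is false. The one-dimensional abelian Lie algebra $\RR$ is completely solvable, yet its faithful representation $t\mapsto\left(\begin{smallmatrix}0&t\\-t&0\end{smallmatrix}\right)$ has eigenvalues $\pm it$ and admits no real eigenvector. Condition $(2)$ controls the weights of the \emph{adjoint} representation only; the weights of an arbitrary representation, in particular of the one handed to you by Ado's theorem, are not determined by them. Your fallback suggestions do not repair this: $\mathrm{ad}$ plus a trivial summand is not faithful when the center is nontrivial, and the ``add one dimension'' trick does not yield faithfulness in general (for the Heisenberg algebra the center equals $[\fg,\fg]$). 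What is actually required is to \emph{construct} one faithful triangularizable representation, and that is what the paper's argument (due to Vinberg) does: embed via Ado, pass to the minimal algebraic hull, split the maximal torus into an $\RR$-split part and a compact part $\SS$, use triangularizability of the adjoint image (which $(2)$ does control) to show $\SS(\RR)$ is central, and then re-represent that central compact factor diagonally before taking a direct sum. Some step of this kind --- replacing the ``rotational'' part of the given faithful representation --- is unavoidable, and it is missing from your proposal.
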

\begin{proof}

For the  equivalence of $(1)$ and $(2)$ we refer to \cite[Corollary
1.30]{Knapp}.

The implication $(3)\Rightarrow (1)$ is easy.

Thus we only need to see that $(1)$ implies $(3)$. Although this
implication is stated in several books, e.g. in \cite[Section
2]{OnishIII}, we could not find a reference for a proof of it. Here we present an
argument   provided to us by E.B.~Vinberg in a private communication.

\medskip

\noindent  $(1)\Longrightarrow (3)$.
Using Ado's  theorem we can embed $\fg$ into $\gl(V)$ for
some finite dimensional $\mathbb R$-vector space $V$, and we will  assume
that $\fg$ is a Lie subalgebra of $\gl(V)$.

Let $\fg^a$ be the minimal algebraic subalgebra of $\gl(V)$ containing
$\fg$,  i.e. $\fg^a$ is the Lie algebra of an algebraic subgroup of
$\GL(V)$, $\fg^a$ contains $\fg$ and is the minimal such. (It exists by a
dimension argument.)   By
\cite[Chapter 1, Theorem 6.2]{OnishIII} (see also \cite[Theorem 4.1]{PPS}) we
have $[\fg^a,\fg^a] =[\fg,\fg]$, in particular $\fg$ is an ideal of $\fg^a$ and  $\fg^a/\fg$ is abelian.

Let $\GG<\GL(V)$ be a connected  algebraic subgroup whose Lie algebra is $\fg^a$.
Clearly  $\GG$ is defined over $\mathbb R$.

Let $\fg=\fg_n>\fg_{n-1} >\dotsb>\fg_0=0$ be a complete flag of ideals in
$\fg$.  Consider the adjoint action of $\GG$ on $\fg^a$.  Let
\[ G'=\{ g\in \GG \colon Ad(g)(\fg_i) \subseteq \fg_i \text{ for each } i=0,\dotsc,n\}.\]
$G'$ is an algebraic subgroup of $\GG$ and,  by \cite[Capter 2,
Proposition 1.4]{OnishI}, its Lie algebra is
\[ {\mathfrak{g}}'=\{ \xi \in \fg^a \colon [\xi,\fg_i] \subseteq \fg_i \text{ for each } i=0,\dotsc,n\}.\]
Obviously $\fg\subseteq \mathfrak{g}'$, so by minimality of $\fg^a$ we
obtain ${\mathfrak{g}}'=\fg_a$ and $G'=\GG$.

Thus each $\fg_i$ is an ideal of $\fg^a$, and since $\fg/\fg^a$ is abelian,
the chain $\fg=\fg_n>\fg_{n-1} >\dotsb>\fg_0=0$ can be extended to a complete
flag of ideals of $\fg^a$. Hence $\fg^a$ is completely solvable.
Replacing $\fg$ by $\fg^a$ if needed we will  assume that $\fg$ is the Lie
subalgebra of an algebraic group $\GG$ defined over $\RR$.

Let $\GG_u$ be the subset of $\GG$ consisting of unipotent elements, and
$\TT<\GG$ a maximal algebraic torus defined over $\mathbb R$ (as usual by
an algebraic torus we mean a commutative algebraic group consisting of
semi-simple elements).
Then, by \cite[Theorem III.10.6]{Borel},  $\GG_u$ is a normal subgroup
of $\GG$ and $\GG=\TT\cdot \GG_u$ (a semi-direct product).

We now consider $\RR$-points of groups $\GG$, $\GG_u$ and $\TT$.
We have $\GG(\RR)=\TT(\RR)\cdot \GG_u(\RR)$,  $\GG_u(\RR)$ is a normal
subgroup of $\GG(\RR)$, and $\fg$ is the Lie algebra of $\GG(\RR)$ viewed as
a Lie group.

We can write $\TT$ as a product $\TT=\DD\cdot\SS$ of algebraic
groups $\DD$, $\SS$
defined over $\RR$ such that $\DD(\RR)$ is isomorphic over $\RR$ to a
product of multiplicative group $\GG_m(\RR)$ and $\SS(\RR)$ is compact.

We have $\GG(\RR)=\SS(\RR)\cdot \DD(\RR)\cdot \GG_u(\RR)$. Let $\HH=\DD\cdot
\GG_u$.  Then $\HH(\RR)$  is a normal subgroup of $\GG(\RR)$ and
$\GG(\RR)=\SS(\RR)\cdot \HH(\RR)$ with finite intersecton $\SS(\RR)\cap \HH(\RR)$.
It is not hard to see that $\HH$ is
an $\RR$-split solvable group, hence, by \cite[Theorem V.15.4]{Borel},
we can choose a basis in $V$ so that every matrix in $H(\RR)$ is upper
triangular.

Consider the adjoint representation  $Ad\colon \GG(\RR)\to \GL(\fg)$.
The differential of this representation is the adjoint representation $ad\colon \fg\to
\mathrm{End}(\fg)$ of $\fg$. By the equivalence of (1)
and (2), for every $\xi\in \fg$ all  eigenvalues of  $ad(\xi)$ are
real. Hence, by \cite[Corollary
1.30]{Knapp}, there is a basis of $\fg$ such that  $ad(\fg)$ consists of
upper triangular matrices, hence $Ad(\GG(\RR))$ consists of upper triangular
matrices as well.  Since the group of upper triangular matrices does
not have non-trivial compact subgroup it implies that the image of
$\SS(\RR)$ under adjoint representation is trivial, hence $\SS(\RR)$
is in the center of $\GG(\RR)$, in particular it is a normal subgroup
of $\GG(\RR)$.

Let $\mathfrak{s}$ be the Lie algebra of $\SS(\RR)$ and $\mathfrak{h}$
be the Lie algebra of $\HH(\RR)$.  Both $\mathfrak{s}$ and
$\mathfrak{h}$ are ideals of $\fg$ and $\fg$ is the direct sum
$\fg=\mathfrak{s}\oplus\mathfrak{h}$.   We know already that (after
choosing a suitable basis) $H$ consist of upper triangular matrices,
hence $\mathfrak{h}$ also consists of upper triangular matrices, so
$\mathfrak{h}$ has a faithful representation by upper triangular
matrices. The Lie algebra $\mathfrak{s}$ is abelian, hence it also has
a faithful representation by upper-triangular matrices, e.g. by
diagonal matrices. Taking the direct sum of these two representations we
obtain a faithful representation of  $\fg$ by upper triangular
matrices.
\end{proof}

We will also need the following fact due to Dixmier (\cite{Di57}).

\begin{fact}\label{Dixmier1} Let $G$ be a connected Lie group
  whose Lie algebra $\frak g$ is completely solvable. Then the
  exponential map $\exp_{\frak g}\colon \frak g \to G$
is surjective. If in addition $G$ is simply connected then $\exp_{\frak
  g}$ is a diffeomorphism.
 \end{fact}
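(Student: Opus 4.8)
The plan is to prove the diffeomorphism assertion first, for $G$ simply connected, and then to deduce surjectivity for an arbitrary connected $G$ by passing to the universal cover $p\colon\widetilde G\to G$: then $\widetilde G$ is simply connected with the same (still completely solvable) Lie algebra $\fg$, naturality of $\exp$ gives $p\circ\exp_{\widetilde G}=\exp_{\fg}$, and surjectivity of $p$ and of $\exp_{\widetilde G}$ yields surjectivity of $\exp_{\fg}$. So assume $G$ simply connected; I would then show separately that $\exp_{\fg}$ is a local diffeomorphism everywhere and a bijection. For the first, the classical formula $(d\exp)_\xi=(dL_{\exp\xi})_e\circ\frac{1-e^{-ad(\xi)}}{ad(\xi)}$ shows that $(d\exp)_\xi$ is singular only if $ad(\xi)$ has an eigenvalue in $2\pi i\,\mathbb Z\setminus\{0\}$ (the zeros of the entire function $z\mapsto\frac{1-e^{-z}}{z}$); but by the equivalence $(1)\Leftrightarrow(2)$ of Lemma~\ref{Dixmier2}, complete solvability of $\fg$ forces every $ad(\xi)$ to have only real eigenvalues, so this never happens and $\exp_{\fg}$ is a local diffeomorphism at each point.

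For bijectivity I would induct on $n=\dim\fg$, the cases $n\le 1$ being immediate ($G\cong\RR$, $\exp_{\fg}$ linear). For $n\ge 2$, complete solvability supplies a one-dimensional ideal $\fa$ of $\fg$ (the first nonzero term of a complete flag of ideals); let $A\le G$ be the corresponding connected subgroup. Since in a simply connected solvable Lie group the connected subgroup attached to a subalgebra is closed, connected and simply connected, $A\cong\RR$ and $\overline G:=G/A$ is again a simply connected solvable Lie group, with completely solvable Lie algebra $\overline\fg=\fg/\fa$ (truncate the flag); by the induction hypothesis $\exp_{\overline G}$ is a bijection. The key step is a fibre computation: for $\eta_0\in\fg$ and $v\in\fa$ put $u(s)=\exp_{\fg}(-s\eta_0)\exp_{\fg}\big(s(\eta_0+v)\big)$; using $Ad(\exp_{\fg}(-s\eta_0))\eta_0=\eta_0$ and that $\fa$ is an ideal, a variation-of-parameters computation of the right logarithmic derivative of $u$ shows that $u$ stays in $A$ and
\[
u(1)=\exp_A\!\Big(\int_0^1 Ad(\exp_{\fg}(-s\eta_0))|_{\fa}\,v\ ds\Big)=\exp_A\big(T(\eta_0)v\big),\qquad T(\eta_0)=\int_0^1 e^{-s\lambda(\eta_0)}\,ds,
\]
where $\lambda(\eta_0)\in\RR$ is the scalar by which $ad(\eta_0)$ acts on the one-dimensional ideal $\fa$. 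As $\lambda(\eta_0)$ is real, $T(\eta_0)=\frac{1-e^{-\lambda(\eta_0)}}{\lambda(\eta_0)}>0$ (equal to $1$ when $\lambda(\eta_0)=0$), so $v\mapsto\exp_A(T(\eta_0)v)$ is a bijection $\fa\to A$, and rearranging $u(1)=\exp_{\fg}(-\eta_0)\exp_{\fg}(\eta_0+v)$ gives the identity
\[
\exp_{\fg}(\eta_0+v)=\exp_{\fg}(\eta_0)\,\exp_A\big(T(\eta_0)v\big)\qquad(\eta_0\in\fg,\ v\in\fa).
\]

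To close the induction: given $g\in G$, take the unique $\overline\eta\in\overline\fg$ with $\exp_{\overline G}(\overline\eta)$ the image of $g$ in $\overline G$ and a lift $\eta_0\in\fg$, so $g=\exp_{\fg}(\eta_0)a$ with $a\in A$; solving $\exp_A(T(\eta_0)v)=a$ for $v\in\fa$ gives $\exp_{\fg}(\eta_0+v)=g$, proving surjectivity. If $\exp_{\fg}(\eta)=\exp_{\fg}(\eta')$, passing to $\overline G$ and using injectivity of $\exp_{\overline G}$ forces $\eta'=\eta+v$ with $v\in\fa$, whence the identity above gives $\exp_A(T(\eta)v)=e$ and so $v=0$. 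Thus $\exp_{\fg}$ is a bijective local diffeomorphism, hence a diffeomorphism, and the induction is complete.

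The decisive point is the fibre computation, and its reliance on complete solvability is genuine: it works precisely because $\fg$ has a one-dimensional ideal $\fa$, which forces $ad(\eta_0)$ to act on $\fa$ by a single \emph{real} scalar and hence makes the integrating factor $T(\eta_0)=\frac{1-e^{-\lambda}}{\lambda}$ nonvanishing; for $\widetilde E^0(2)$ of Example~\ref{sample:noncompl-solv}, which has no such ideal, $\exp$ is in fact not surjective. The one external input I would invoke is the classical fact that in a simply connected solvable Lie group the connected subgroup attached to a subalgebra is closed and simply connected, so that $\overline G=G/A$ is a Lie group of the same kind and the induction is legitimate.
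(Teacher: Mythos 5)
Your argument is correct, but note that the paper does not prove this statement at all: it is quoted as a Fact with a citation to Dixmier's 1957 paper, so what you have produced is a self-contained proof of the cited result rather than a variant of an argument in the text. Your route is essentially the classical one for Dixmier's theorem in the completely solvable case, and it fits the paper well: the reduction to the simply connected case via the universal cover is standard; the nonsingularity of $d\exp$ via the formula $(d\exp)_\xi=(dL_{\exp\xi})_e\circ\frac{1-e^{-\mathrm{ad}(\xi)}}{\mathrm{ad}(\xi)}$ uses exactly the equivalence $(1)\Leftrightarrow(2)$ of Lemma \ref{Dixmier2}, which is already available; and the induction on dimension through a one-dimensional ideal $\fa$ of the complete flag is sound. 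The fibre computation is the heart of the matter and checks out: the right logarithmic derivative of $u(s)=\exp_{\fg}(-s\eta_0)\exp_{\fg}(s(\eta_0+v))$ equals $e^{-s\lambda(\eta_0)}v\in\fa$, and uniqueness of solutions of the corresponding right-invariant ODE identifies $u$ with the curve $s\mapsto\exp_A\bigl(\int_0^s e^{-t\lambda}dt\,v\bigr)$ inside $A$, giving the identity $\exp_{\fg}(\eta_0+v)=\exp_{\fg}(\eta_0)\exp_A(T(\eta_0)v)$ with $T(\eta_0)>0$ precisely because $\lambda(\eta_0)$ is real; this is where complete solvability enters, as you say. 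The one genuine external input, that analytic subgroups of a simply connected solvable Lie group are closed and simply connected (so that $A\cong\RR$ and $G/A$ is again simply connected with completely solvable algebra $\fg/\fa$), is correctly identified and is available in the very reference \cite{OnishIII} the paper already uses. In short: the paper buys brevity by citing \cite{Di57}; your proof makes the Fact self-contained at the cost of one standard structural theorem, and could serve as an appendix-style proof if one wanted the paper independent of Dixmier's article.
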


 \begin{remark}\label{rem:exp-upper-tr}
 For $n\in \mathbb N$ let $T^+_n(\mathbb R)$ be the
   group of all upper triangular $n\times n$-matrices with positive
   diagonal entries.  Its Lie algebra is the algebra of all upper
   triangular $t_n(\RR)$.  Let $Exp_n\colon \gl(n,\RR)\to \GL(n,\RR)$ be
   the usual matrix exponentiation
\[
Exp_n(M)=\sum_{i=0}^\infty \frac{1}{n!}{M^n}.
\]
It follows from Fact \ref{Dixmier1} (and also can be seen by direct
computations) that $Exp_n$ maps $t_n(\RR)$ \emph{diffeomorphically}
onto $T^+_n(\RR)$. Also  for any Lie subgroup $G < T^+_n(\RR)$ the
matrix exponentiation  $Exp_n$
maps the Lie algebra $\fg$ of $G$ diffeomorphically onto $G$.
 \end{remark}

\begin{lemma}\label{lemma:com-tr}
Let $G$ be a connected torsion-free completely solvable  Lie group. Then $G$
is Lie isomorphic to a Lie subgroup of $T^+_n(\RR)$ for some $n\in
\mathbb N$.
\end{lemma}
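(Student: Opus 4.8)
The plan is to reduce the statement to the algebraic embedding of the Lie algebra already established in Lemma \ref{Dixmier2}, combined with the rigidity of simply connected Lie groups. First I would pass to the Lie algebra: let $\fg$ be the Lie algebra of $G$, a finite-dimensional real Lie algebra. Since $G$ is completely solvable, $\fg$ is completely solvable, so by the implication $(1)\Rightarrow(3)$ of Lemma \ref{Dixmier2} there exist $n\in\mathbb N$ and an injective Lie algebra homomorphism $\fg\hookrightarrow t_n(\RR)$. Identifying $\fg$ with its image, we may regard it as a Lie subalgebra $\mathfrak h\subseteq t_n(\RR)$.

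Next I would construct the target group inside $T^+_n(\RR)$. Let $H$ be the connected Lie subgroup of $\GL(n,\RR)$ whose Lie algebra is $\mathfrak h$; since $t_n(\RR)$ is the Lie algebra of the connected group $T^+_n(\RR)$ and $\mathfrak h\subseteq t_n(\RR)$, the subgroup $H$ is contained in $T^+_n(\RR)$, so $H$ is a Lie subgroup of $T^+_n(\RR)$. By Remark \ref{rem:exp-upper-tr}, the matrix exponential $Exp_n$ maps $\mathfrak h$ diffeomorphically onto $H$; in particular $H$ is diffeomorphic to $\RR^{\dim\fg}$, hence torsion-free, hence simply connected by Fact \ref{fact:tor-free-simply}.

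Finally I would invoke uniqueness of the integrating group. The group $G$ is connected torsion-free solvable, hence simply connected by Fact \ref{fact:tor-free-simply}, and $H$ is a simply connected Lie group whose Lie algebra is isomorphic (via the identification above) to $\fg$. By the functorial correspondence between simply connected Lie groups and their Lie algebras, this isomorphism $\fg\cong\mathfrak h$ integrates to a Lie group isomorphism $G\xrightarrow{\ \sim\ }H$; composing it with the inclusion $H\hookrightarrow T^+_n(\RR)$ exhibits $G$ as Lie isomorphic to a Lie subgroup of $T^+_n(\RR)$.

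I do not expect a genuine obstacle: all the substance is already contained in Lemma \ref{Dixmier2} and Remark \ref{rem:exp-upper-tr}. The one point requiring a little care is that the connected subgroup $H$ of $T^+_n(\RR)$ integrating $\mathfrak h$ is automatically simply connected, so that the abstract isomorphism $G\cong H$ can be applied; but this is exactly what the diffeomorphism $Exp_n\colon\mathfrak h\to H$ furnished by Remark \ref{rem:exp-upper-tr} gives, so no separate argument (such as passing to a universal cover and verifying it still embeds in $T^+_n(\RR)$) is needed.
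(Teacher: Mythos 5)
Your proof is correct and follows essentially the same route as the paper: embed $\fg$ into $t_n(\RR)$ via Lemma \ref{Dixmier2}, pass to the connected integral subgroup of $T^+_n(\RR)$, check it is simply connected, and lift the Lie algebra isomorphism to a group isomorphism using simple connectedness of both sides. The only cosmetic difference is that you certify simple connectedness of the image subgroup via the exponential diffeomorphism of Remark \ref{rem:exp-upper-tr}, whereas the paper cites a theorem on images of homomorphisms of simply connected solvable groups; both rest on the same underlying fact.
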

\begin{proof}
 Let $\frak g$ be the Lie algebra of $G$. Since $\frak g$ is
 completely solvable, by  Lemma \ref{Dixmier2}, there is an embedding
$\varphi\colon \mathfrak g \to t_n(\RR)$ for some $n\in \mathbb N$.

By the general theory of Lie groups and Lie algebras, there is a smooth
group homomorphsim $\Phi\colon G\to T^+_n(\RR)$ whose differential is
$\varphi$.

By \cite[Section 2, Theorem 2.4]{OnishIII}, the image $\Phi(G)$ is a
simply connected Lie subgroup of $T^+_n(\RR)$.
Since a lift of an isomorphism between Lie
algebras to corresponding simply connected Lie groups is an
isomorphism,  $\Phi$ is a Lie  isomorphism between $G$ and $\Phi(G)$.
\end{proof}

 \begin{lemma}\label{lem:exp-def-in-m}
For every $n\in \mathbb N$ the restriction of the map $Exp_n$ to $t_n(\RR)$ is definable in $\RR_{exp}$.
 \end{lemma}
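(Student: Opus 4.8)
The plan is to argue by induction on $n$. The base case $n=1$ is immediate, since $t_1(\RR)=\RR$ and $Exp_1(x)=e^{x}$, a basic function of $\RR_{exp}$. So assume $n\ge 2$ and, as inductive hypothesis, that the restriction of $Exp_{n-1}$ to $t_{n-1}(\RR)$ is definable in $\RR_{exp}$. Two ingredients will then suffice: an explicit block formula for the exponential of an upper triangular matrix that is valid on a dense open subset of $t_n(\RR)$, and the elementary fact that a continuous map agreeing on a dense definable set with an $\RR_{exp}$-definable map is itself $\RR_{exp}$-definable.

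First I would write $M\in t_n(\RR)$ in block form $M=\begin{pmatrix}A & b\\ 0 & c\end{pmatrix}$ with $A\in t_{n-1}(\RR)$, $b\in\RR^{n-1}$ and $c\in\RR$, so that the diagonal entries of $M$ are those of $A$ together with $c$. Solving blockwise the differential equation satisfied by $t\mapsto e^{tM}$ — the only nontrivial block is $w(t)$, which obeys $w'(t)=Aw(t)+e^{ct}b$ with $w(0)=0$ and is integrated by the factor $e^{-tA}$ — gives
\[ Exp_n(M)=\begin{pmatrix}Exp_{n-1}(A) & w\\ 0 & e^{c}\end{pmatrix},\qquad w=e^{c}\Bigl(\int_0^1 e^{\,s(A-cI)}\,ds\Bigr)b .\]

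Next I would pass to the set $U=\{M\in t_n(\RR): c\ne A_{ii}\text{ for all }i<n\}$, which is open, dense, and definable in the real field. For $M\in U$ the matrix $A-cI$ is invertible, so $\int_0^1 e^{\,s(A-cI)}\,ds=(A-cI)^{-1}\bigl(e^{A-cI}-I\bigr)$; since $cI$ is central, $e^{A-cI}=e^{-c}Exp_{n-1}(A)$, and the formula simplifies to
\[ Exp_n(M)=\begin{pmatrix}Exp_{n-1}(A) & (A-cI)^{-1}\bigl(Exp_{n-1}(A)-e^{c}I\bigr)b\\ 0 & e^{c}\end{pmatrix}\qquad(M\in U). \]
Each entry on the right is assembled from $A\mapsto Exp_{n-1}(A)$ (definable in $\RR_{exp}$ by the inductive hypothesis), matrix inversion and field arithmetic, and $c\mapsto e^{c}$; hence the restriction of $Exp_n$ to $U$ is definable in $\RR_{exp}$. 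Finally, $Exp_n$ is continuous on $t_n(\RR)$ — by Remark \ref{rem:exp-upper-tr} it is even a diffeomorphism onto $T^+_n(\RR)$ — so, by continuity and the density of $U$, the graph of $Exp_n|_{t_n(\RR)}$ is exactly the set of limits of convergent sequences in the graph of $Exp_n|_{U}$, i.e. the topological closure of that graph; and the closure of a set definable in $\RR_{exp}$ is again definable in $\RR_{exp}$. Therefore $Exp_n|_{t_n(\RR)}$ is definable in $\RR_{exp}$, completing the induction.

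The block formula is a routine integrating-factor computation and poses no real difficulty. The genuine subtlety is that its closed form is valid only on the proper (though dense) subset $U$ — the matrix $A-cI$ becomes singular precisely when $c$ equals a diagonal entry of $A$ — which is why the argument must invoke the continuous-extension principle to descend from $U$ to all of $t_n(\RR)$. (An alternative, which I would not pursue here, is to expand $Exp_n(M)$ entrywise as a power series, collect the length-$k$ paths from $i$ to $j$ according to their strictly increasing ``skeleton'', and identify the resulting coefficient functions with divided differences of $\exp$, which are visibly definable in $\RR_{exp}$; but this introduces combinatorial bookkeeping that the induction avoids.)
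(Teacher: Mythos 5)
Your proof is correct, but it takes a genuinely different route from the paper's. The paper argues in one stroke via the (additive) Jordan decomposition: on the set of matrices all of whose eigenvalues are real, one writes $M$ as a sum of commuting diagonalizable and nilpotent parts, so that $Exp_n(M)$ is the product of a polynomial factor $\sum_{k<n}N^k/k!$ and a conjugated diagonal exponential; since the decomposition and the conjugating data are semialgebraic and only scalar exponentials $e^{\lambda}$ enter, definability in $\RR_{exp}$ follows, and in fact on the larger set of all real-spectrum matrices, not just $t_n(\RR)$. Your induction on $n$ replaces this by the block formula
\[ Exp_n\begin{pmatrix}A & b\\ 0 & c\end{pmatrix}=\begin{pmatrix}Exp_{n-1}(A) & (A-cI)^{-1}\bigl(Exp_{n-1}(A)-e^{c}I\bigr)b\\ 0 & e^{c}\end{pmatrix} \]
on the dense definable open set where $c$ avoids the diagonal of $A$, followed by the observation that the graph of the (continuous) map $Exp_n|_{t_n(\RR)}$ is the closure of its restriction to that set, and closures of definable sets are definable; all of these steps check out (the integrating-factor computation, the identity $\int_0^1 e^{sB}\,ds=B^{-1}(e^B-I)$ for invertible $B$, and the closure argument are all sound, and your appeal to Remark \ref{rem:exp-upper-tr} for continuity is not circular since that remark rests only on Dixmier's theorem). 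What your approach buys is self-containedness: it avoids having to justify that the Jordan (semisimple/nilpotent) decomposition can be carried out uniformly and definably in the parameters, which the paper's "it is easy to see" glosses over; what it gives up is brevity and the slightly stronger conclusion about all matrices with real spectrum, which, however, the paper never uses.
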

 \begin{proof} Using Jordan normal form, because the diagonal and idempotent components commute, it
   is easy to that the restriction of $Exp_n$ to
   the set of  $n\times n$-matrices whose all eigenvalues are real
   is definable in $\RR_{exp}$.
  \end{proof}

\begin{theorem}\label{LieAreDef}
Any connected torsion-free  solvable  Lie group is Lie isomorphic
to a group definable in $\mathbb R_{exp}$.
\end{theorem}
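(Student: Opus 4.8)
The plan is simply to combine the preparatory results of this section. Since only a Lie \emph{isomorphism} is required, by Lemma~\ref{lemma:com-tr} I may assume from the outset that $G$ is already a Lie subgroup of $T^+_n(\RR)$ for some $n\in\mathbb N$, with Lie algebra $\fg$ a Lie subalgebra of $t_n(\RR)$. (Complete solvability enters precisely here, via Lemma~\ref{Dixmier2}; and in view of Theorem~\ref{completely solvableThm} this is anyway the only case that can occur, since a torsion-free Lie group that is not completely solvable, such as $\widetilde{E}^0(2)$, is not definable in any o-minimal structure — so ``solvable'' should be read as ``completely solvable'' throughout.)

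Next I would check that, viewed as a subset of $\GL(n,\RR)$, the group $G$ is definable in $\RR_{exp}$. The Lie algebra $\fg$ is a linear subspace of $t_n(\RR)\cong\RR^{n(n+1)/2}$, hence is cut out by finitely many linear equations and is in particular definable (even in the pure real field). By Remark~\ref{rem:exp-upper-tr} the matrix exponential $Exp_n$ carries $\fg$ diffeomorphically onto $G$, so $G=Exp_n(\fg)$; and by Lemma~\ref{lem:exp-def-in-m} the restriction of $Exp_n$ to $t_n(\RR)$ is definable in $\RR_{exp}$. Thus $G$ is the image of a definable set under a definable map, hence a definable subset of $T^+_n(\RR)\subseteq\GL(n,\RR)$. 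Being a subgroup, $G$ is closed under matrix multiplication and matrix inversion, which are definable (field) operations, so $G$ is a group definable in $\RR_{exp}$.

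Finally I would verify that this definable group is \emph{Lie} isomorphic to the original $G$, i.e. that the Lie group structure supplied by Fact~\ref{LieGroups} coincides with the one $G$ already carries as a Lie subgroup of $T^+_n(\RR)$, rather than being merely abstractly isomorphic to it. This follows from Fact~\ref{fact:defsubgr}: $G$ is a definable subgroup of the definable group $\GL(n,\RR)$, hence a closed Lie subgroup of it, so its manifold structure as a definable group is the subspace structure inherited from $\GL(n,\RR)$, which is exactly its given structure as a Lie subgroup of $T^+_n(\RR)$. Hence the identity map exhibits $G$ itself as a group definable in $\RR_{exp}$.

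The real content of the theorem is carried by the preparatory lemmas — Lemma~\ref{Dixmier2} (embedding a completely solvable Lie algebra into the upper triangular matrices) and Lemma~\ref{lem:exp-def-in-m} (definability of the matrix exponential on matrices with real spectrum) — so the argument above is essentially bookkeeping. The only point that demands a little care is the last step: making sure that ``definable group'' and ``Lie group'' refer to the \emph{same} Lie group, which is handled by realizing $G$ as a closed subgroup of $\GL(n,\RR)$, where the compatible Lie structure is unique.
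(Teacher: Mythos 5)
Your proof is correct and follows essentially the same route as the paper's: reduce to a Lie subgroup of $T^+_n(\RR)$ via Lemma~\ref{lemma:com-tr}, write $G=Exp_n(\fg)$ using Fact~\ref{Dixmier1} (equivalently Remark~\ref{rem:exp-upper-tr}), and conclude definability from Lemma~\ref{lem:exp-def-in-m}; your extra care about the definable and Lie structures agreeing is a harmless refinement of what the paper leaves implicit. You are also right that the hypothesis must be read as \emph{completely} solvable (as in the introduction and in Theorem~\ref{thm:tor-free-mail}), since otherwise Example~\ref{sample:noncompl-solv} together with Theorem~\ref{completely solvableThm} would contradict the statement.
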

\begin{proof}
Let $G$ be a connected torsion-free  solvable   Lie group.
 By Lemma \ref{lemma:com-tr}, $G$ is Lie isomorphic to some Lie group
 $G'\subseteq T_n^+(\RR)$.  Let $\mathfrak{g'}\subseteq t_n(\RR)$ be
   the Lie algebra of $G'$.  Then $G'=Exp_n(\mathfrak g')$ by Fact
   \ref{Dixmier1}, and $G'$  is definable in $\RR_{exp}$ by Lemma \ref{lem:exp-def-in-m}.
 \end{proof}

\section{Torsion-free  solvable o-minimal groups are completely solvable}\label{Appendix}

In this section we will prove that every torsion-free  group
definable  in \emph{any} o-minimal structure is definably completely solvable.

\bigskip

We will need the following result by Baro, Jaligot and Otero, which is
Corollary 6.8 in \cite{BaroJagOt} that guarantees existence of infinite abelian definable normal subgroups in any definable torsion-free group.

\begin{fact}\label{BaJaOt}
Let $G$ be a connected solvable group definable in an o-minimal structure. Then $[G, G]$ is definable and
definably connected and, for each $n\geq 1$, the $n$-th derived subgroup $G^{(n)}$ is definable and definably connected.
\end{fact}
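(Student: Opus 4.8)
The plan is to reduce to the case $n=1$, prove that $[G,G]$ is definable, and observe that definable connectedness then comes for free. For the reduction, assume inductively that $G^{(k)}$ is definable and definably connected for all $k<n$; then $G^{(n)}=[G^{(n-1)},G^{(n-1)}]$ with $G^{(n-1)}$ again a definably connected solvable definable group, so the case $n=1$ applied to $G^{(n-1)}$ completes the induction. For the automatic definable connectedness, suppose $H:=[G,G]$ is a definable group and, for $k\geq 1$, let $C_k\subseteq G$ be the set of products of at most $k$ commutators, i.e. the image of $(G\times G)^k$ under $((g_i,h_i))_i\mapsto\prod_i[g_i,h_i]$. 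Each $C_k$ is a continuous image of the definably connected set $(G\times G)^k$, hence definably connected, and $e\in C_1\subseteq C_2\subseteq\dotsb$ with $H=\bigcup_k C_k$. Since $H$ has only finitely many definably connected components and each $C_k$ is a definably connected subset of $H$ through $e$, every $C_k$ lies in the identity component $H^0$; hence $H=\bigcup_k C_k\subseteq H^0$ and $H=H^0$ is definably connected.

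So the real content is that $[G,G]$ is definable, which I would prove by induction on $\dim G$. If $G$ is abelian there is nothing to prove. Otherwise the structure theory of definable solvable groups provides a nontrivial definable, definably connected, abelian, normal subgroup $A$ of $G$ --- for instance the nilradical $N$ of $G$ is a nontrivial definable, definably connected, normal, nilpotent subgroup, and $A:=Z(N)^0$ works. Passing to $\bar G=G/A$, a definably connected solvable group of strictly smaller dimension, the induction hypothesis gives that $[\bar G,\bar G]=[G,G]A/A$ is definable, so its preimage $B=[G,G]A$ is definable (and definably connected, by the previous paragraph). Moreover $B\subsetneq G$ --- otherwise $\bar G$ would be perfect, hence trivial since it is solvable, forcing $G=A$ abelian --- so $\dim B<\dim G$ and, by the induction hypothesis, $[B,B]$ is definable; thus $[G,G]$ is sandwiched between two definable subgroups, $[B,B]\subseteq[G,G]\subseteq B$. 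The remaining, genuinely delicate, point is to pin down $[G,G]$ itself; one cannot merely replace it by $[G,A]$, since these differ already for the Heisenberg group. This is done by a careful analysis of the commutator map of $G$ relative to $A$ and $B$ which, using o-minimal dimension theory together with the descending chain condition on definable subgroups, shows that $[G,G]$ is reached by boundedly many commutators, i.e. $[G,G]=C_k$ for some fixed $k$, and is therefore manifestly definable. This last step is the technical core of the statement and is exactly what is carried out in \cite{BaroJagOt}, which we invoke.

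The main obstacle is precisely this bounded-width step, and it is not a formality: in a definable group the subgroup generated by a definable --- even definably connected --- subset need not be definable at all, the standard example being an arc of an irrational line on a torus, which is definable and definably connected yet generates a dense, non-definable subgroup. So one genuinely needs both that $G$ is solvable and definably connected (so that $[G,G]$ is definably nilpotent and lies inside the definable nilradical, leaving no ``hidden'' piece that could spoil definability of the generated subgroup) and that one generates by commutators rather than by arbitrary elements (so that the pieces appearing along the inductive reduction are definable subgroups). Carrying the o-minimal dimension count through this reduction so as to bound the commutator width uniformly is where the work lies, which is why we take the statement from \cite{BaroJagOt}.
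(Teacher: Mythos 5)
The paper offers no proof of this statement at all---it is quoted verbatim as Corollary 6.8 of \cite{BaroJagOt}---and your proposal likewise defers the only genuinely hard step (the bounded commutator width argument giving definability of $[G,G]$) to that same reference, so you are taking essentially the same route. Your surrounding reductions (the induction on $n$, and the observation that once $[G,G]$ is definable it is definably connected because it is an increasing union of definably connected definable sets through the identity) are correct, but they are not where the content of the statement lies.
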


From the above  fact we obtain the following corollary.

\begin{corollary}\label{an abelian group}
Let $G$ be a torsion-free group definable in an o-minimal
structure. Then it contains an infinite  abelian normal definable subgroup.
\end{corollary}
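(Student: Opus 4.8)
The plan is to derive \textbf{Corollary \ref{an abelian group}} from \textbf{Fact \ref{BaJaOt}} by descending along the derived series. Let $G$ be a torsion-free group definable in an o-minimal structure. By Fact \ref{fact:tor-free}(1) $G$ is definably connected and solvable, so its derived series $G \ge G^{(1)} \ge G^{(2)} \ge \dotsb$ terminates at $\{e\}$ after finitely many steps, say $G^{(k)} \ne \{e\}$ but $G^{(k+1)} = \{e\}$. (Here $k\ge 0$; if $G$ itself were abelian we could just take the subgroup $G$, but in any case the argument below covers it.) By Fact \ref{BaJaOt} each $G^{(n)}$ is definable and definably connected, and each is visibly a characteristic subgroup of $G$, hence normal in $G$. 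Set $A := G^{(k)}$.

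First I would check that $A$ is abelian: since $G^{(k+1)} = [G^{(k)}, G^{(k)}] = \{e\}$, the group $A = G^{(k)}$ has trivial commutator subgroup, i.e.\ it is abelian. Next I would check that $A$ is infinite: $A$ is definably connected (by Fact \ref{BaJaOt}) and nontrivial, and a nontrivial definably connected group definable in an o-minimal structure cannot be finite — a finite definably connected group is trivial, since definable connectedness forbids a proper finite-index definable subgroup, and in a finite group the trivial subgroup has finite index. (Alternatively, one may invoke that $A$, being torsion-free by Fact \ref{fact:tor-free}(3) applied to the normal subgroup situation, is certainly infinite once nontrivial.) Thus $A$ is an infinite abelian normal definable subgroup of $G$, as required.

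There is no real obstacle here; the only point requiring a word of care is the passage from ``nontrivial and definably connected'' to ``infinite,'' which I would phrase via definable connectedness (no proper definable finite-index subgroup) rather than via torsion-freeness, so as not to lean on Fact \ref{fact:tor-free}(3) more than necessary. One should also note explicitly that the derived series of $G$ does reach $\{e\}$ — this is exactly solvability of $G$, guaranteed by Fact \ref{fact:tor-free}(1) — so that the index $k$ with $G^{(k)} \ne \{e\} = G^{(k+1)}$ is well defined.
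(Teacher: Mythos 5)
Your proof is correct and follows essentially the same route as the paper: both descend along the derived series (definable and definably connected by Fact \ref{BaJaOt}, which terminates by solvability from Fact \ref{fact:tor-free}) and take the last nontrivial term as the desired infinite abelian normal definable subgroup. The extra care you take in justifying infiniteness (via definable connectedness or torsion-freeness) is a welcome but inessential refinement of the paper's argument.
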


\begin{proof}
By Fact \ref{fact:tor-free} $G$ is solvable, and by Fact \ref{BaJaOt} all of the derived subgroups are definable. Since $G$ is finite dimensional, its
derived series is finite so we have  least $n\in \mathbb N$ such that $G^{(n+1)}=0$ for some $n$. But then $G^{(n)}$
is an abelian normal subgroup of $G$, as required.
\end{proof}
\bigskip

We will also need the following fact that follows from
Theorem A and  Theorem 4.1 in \cite{MiSt}.



\begin{fact}\label{Corollary to MiSt}
Let $\mathcal R$ be an o-minimal expansion of a real closed field
with additive group $(R,+)$ and multiplicative group $(R^{>0},
\cdot)$. Then either $(R,+)$ is $\mathcal R$-definably isomorphic to
$(R^{>0}, \cdot)$, or every definable subgroup of  a Cartesian power $(R^{>0},\cdot)^l$ is 0-definable.
\end{fact}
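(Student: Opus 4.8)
The plan is to read the statement off from the two results of \cite{MiSt} cited just above, the only work being to assemble them into the stated dichotomy. I would start with the growth dichotomy, Theorem~A of \cite{MiSt}: either $\mathcal R$ defines a total exponential, i.e.\ a definable surjection $E\colon R\to R^{>0}$ with $E(x+y)=E(x)E(y)$, or $\mathcal R$ is polynomially bounded. In the first case $E$ is already a definable group isomorphism $(R,+)\to(R^{>0},\cdot)$: it is a nonconstant (hence surjective, by hypothesis) homomorphism, so its kernel is a proper definable subgroup of $(R,+)$, hence trivial. Thus the first alternative of the conclusion holds, and it remains only to handle the polynomially bounded case and show there that every definable subgroup of $(R^{>0},\cdot)^l$ is $0$-definable.

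For the polynomially bounded case I would appeal to Theorem~4.1 of \cite{MiSt}, which classifies the definable subgroups of the multiplicative tori $(R^{>0},\cdot)^l$. First note that such a subgroup $H$ is automatically definably connected: $(R^{>0},\cdot)^l$ is torsion-free, so $H$ is, and by Fact~\ref{fact:tor-free} the finite quotient $H/H^{\circ}$ is torsion-free, hence trivial. Theorem~4.1 then identifies $H$ as the common kernel of finitely many monomial characters $(x_1,\dots,x_l)\mapsto x_1^{\lambda_1}\cdots x_l^{\lambda_l}$ whose exponents $\lambda_i$ lie in the field of exponents $K$ of $\mathcal R$ (equivalently, these subgroups correspond bijectively and inclusion-preservingly to the $K$-linear subspaces of $K^l$). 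Hence $H$ is $0$-definable as soon as every power function $x\mapsto x^{\lambda}$ with $\lambda\in K$ is, i.e.\ as soon as $K\subseteq\operatorname{dcl}(\emptyset)$.

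The last point — that the field of exponents of a polynomially bounded structure lies in $\operatorname{dcl}(\emptyset)$ — is the step I expect to demand the most care in a self-contained treatment, and it is where polynomial boundedness is genuinely used. Heuristically: an exponent is recovered from its power function through the asymptotic identity $\lambda=\lim_{x\to+\infty}x f'(x)/f(x)$, valid for $f(x)=c\,x^{\lambda}$, so the condition ``$\lambda\in K$'' is expressible without parameters; thus $K$ is a $0$-definable subset of $R$, hence a finite union of points and intervals. Being also a subfield of $(R,+)$, $K$ can contain an interval only if $K=R$, and $K=R$ is incompatible with polynomial boundedness (via definable choice it would make $\{x^{\lambda}\}_{\lambda\in R}$ a definable family and so produce a definable function of exponential growth). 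So $K$ is finite, whence $K\subseteq\operatorname{dcl}(\emptyset)$, the monomial characters above are $0$-definable, and therefore so are the subgroups $H$. Combining the exponential and polynomially bounded cases yields the stated alternative; the substantive content is entirely in Theorems~A and~4.1 of \cite{MiSt}, and the remainder is the bookkeeping indicated here.
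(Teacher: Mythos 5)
The paper does not actually prove this Fact --- it only records that it follows from Theorems A and 4.1 of \cite{MiSt} --- so the question is whether your assembly of those two results is sound. Your overall two-case architecture (the exponential case yields the definable isomorphism $(R,+)\cong(R^{>0},\cdot)$; the power-bounded case yields $\emptyset$-definability of the subgroups of $(R^{>0},\cdot)^l$) is indeed the intended reading, and the exponential half of your argument is fine.

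The power-bounded half, however, has a genuine error in exactly the step you flag as delicate. You argue that the field of exponents $K$ is a $0$-definable subset of $R$, hence a finite union of points and intervals, hence (being a proper subfield) finite, hence contained in $\operatorname{dcl}(\emptyset)$. None of these steps survives scrutiny. First, ``$\lambda\in K$'' quantifies over all definable functions, so it is not a first-order condition, and $K$ need not be a definable subset of $R$ at all: for the pure real closed field $K=\mathbb{Q}$. Second, $K$ is a subfield of a field of characteristic $0$, so it is never finite; your own reasoning, taken at face value, would instead force $K=R$. Third, $K=R$ is \emph{not} incompatible with polynomial boundedness: Miller's structure $\mathbb{R}_{an}^{\mathbb{R}}$ (the expansion of $\mathbb{R}_{an}$ by all power functions $x^r$, $r\in\mathbb{R}$) is polynomially bounded with field of exponents all of $\mathbb{R}$; there is no contradiction because the power functions form a class of separately $\emptyset$-definable maps, not a single definable two-variable family, so no definable function of exponential growth is produced. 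What you actually need --- and what is true --- is that in the power-bounded case every \emph{definable} power function, equivalently every definable endomorphism of $(R^{>0},\cdot)$, is $\emptyset$-definable. This should not be re-derived from asymptotics of $xf'(x)/f(x)$: it is precisely the content of alternative (b) of Theorem A of \cite{MiSt} applied to the ordered group $(R^{>0},<,\cdot)$ (every definable function agrees asymptotically, up to a constant, with a $\emptyset$-definable endomorphism, and two endomorphisms of an ordered group at bounded distance coincide), and Theorem 4.1 there already asserts the $\emptyset$-definability of all definable subgroups of the Cartesian powers in the linearly bounded case. The repair is therefore to quote those statements as stated and drop the $\operatorname{dcl}(\emptyset)$ detour entirely.
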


\medskip
The following lemma about definable abelian subgroups of centerless
definable groups will be a key.

\begin{lemma}\label{P(A) for linear}
Let $\mathcal R$ be an o-minimal expansion of a real closed field, let
$G$ be a centerless definably connected definable subgroup of
$\GL_n(\mathcal R)$ for some $n\in \mathbb N$, and let $A< G$ be a
definable torsion-free  normal abelian subgroup of $G$. Then $A$ is
$\mathcal R$-definably isomorphic to a Cartesian power of the additive
group of $\mathcal R$.
\end{lemma}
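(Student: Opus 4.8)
The plan is to exploit the fact that, inside $\GL_n(\mathcal R)$, a torsion-free connected solvable subgroup is (after conjugation) upper triangular with positive diagonal, so the abelian group $A$ sits inside a product of copies of $(R,+)$ and $(R^{>0},\cdot)$; then the dichotomy of Fact \ref{Corollary to MiSt} forces it to actually be a power of $(R,+)$ once we rule out the $(R^{>0},\cdot)$-factors using centerlessness of $G$. First I would observe that $A$, being torsion-free solvable definable, is by Fact \ref{fact:tor-free} definably diffeomorphic to $R^k$ and in particular definably connected; by the Lie-theoretic input available in the excerpt (Lemma \ref{Dixmier2} together with its o-minimal analogue — or directly by the Lie–Kolchin-type triangularization in \cite{Borel} applied over $\mathcal R$) we may conjugate so that $A$ consists of upper triangular matrices. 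Since $A$ is torsion-free and connected it cannot meet the ``roots of unity'' part of the diagonal torus, so in fact $A \subseteq T^+_n(\mathcal R)$, the upper triangular matrices with positive diagonal entries. Writing the diagonal projection $\delta\colon T^+_n(\mathcal R)\to (R^{>0},\cdot)^n$ and letting $U = \ker\delta$ be the unipotent radical, we get a definable exact sequence $1\to A\cap U \to A \xrightarrow{\delta} \delta(A)\to 1$ with $A\cap U$ a definable subgroup of a unipotent group (hence a power of $(R,+)$, as unipotent groups are definably isomorphic to their Lie algebras via $Exp_n$, cf. Remark \ref{rem:exp-upper-tr}) and $\delta(A)$ a definable subgroup of $(R^{>0},\cdot)^n$.

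Next I would apply Fact \ref{Corollary to MiSt}. If $(R,+)$ is $\mathcal R$-definably isomorphic to $(R^{>0},\cdot)$, then $\delta(A)$ is definably isomorphic to a power of $(R,+)$, the sequence splits definably (the quotient being a torsion-free divisible, hence ``definably $\mathcal R$-vector space'', object, so the extension of definable $R$-vector-space-like groups splits — here I would use that definable homomorphisms out of a power of $(R,+)$ into another such are given by matrices over $R$, so $\mathrm{Ext}$ vanishes definably), and $A$ is a power of $(R,+)$, and we are done. So assume the other alternative: every definable subgroup of every $(R^{>0},\cdot)^l$ is $0$-definable. Now comes the key use of centerlessness: I claim $\delta(A)$ must be trivial. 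Indeed, $A$ is normal in $G$, so $G$ acts on $A$ by conjugation; this action preserves the unipotent radical intersection $A\cap U$, hence descends to an action of $G$ on $\delta(A)$. But $\delta(A)$ is $0$-definable and sits inside a power of $(R^{>0},\cdot)$, on which — since $G$ is a definable, hence finite-dimensional, connected group — the induced action factors through a definable action on a $0$-definable group; a connected group acting definably on a $0$-definable subgroup of a multiplicative power acts trivially (a $0$-definable group has no definable deformations: the action $G\times \delta(A)\to \delta(A)$ is a definable family of automorphisms of a fixed $0$-definable group, and the automorphism group of $(R^{>0},\cdot)^l$ relevant here is discrete once we restrict to the $0$-definable subgroup, so connectedness of $G$ kills it). Hence $G$ centralizes $\delta(A)$; lifting, $G$ centralizes $A$ modulo $A\cap U$, and a symmetric/further argument on the unipotent part (or a dimension induction) shows $A$ itself is central in $G$ — contradicting centerlessness unless $A$ is trivial, in which case the statement is vacuous. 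Therefore the second alternative of Fact \ref{Corollary to MiSt} cannot occur (for nontrivial $A$), and we are in the first case, which gives the conclusion.

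I expect the main obstacle to be the centerlessness step: making rigorous the claim that a definably connected $G$ acting by conjugation on the $0$-definable group $\delta(A)$ must act trivially. The clean way is: $\delta(A)\le (R^{>0},\cdot)^n$ is $0$-definable, so it is defined without parameters; the conjugation map $g\mapsto (a\mapsto \delta(gag^{-1}))$ is a definable homomorphism $G\to \mathrm{Aut}_{\mathrm{def}}(\delta(A))$, and $\mathrm{Aut}_{\mathrm{def}}$ of a $0$-definable subgroup of a multiplicative power is a discrete (indeed finite-by-$\GL$-over-$\mathbb Q$, but restricted to what preserves the $0$-definable subgroup, discrete) group, so a connected $G$ maps trivially. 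One must be slightly careful that $\delta(A)$ need not be all of $(R^{>0},\cdot)^n$, but any definable automorphism of $(R^{>0},\cdot)^n$ is multiplication by an integer matrix (this is where o-minimality and $0$-definability bite, via Fact \ref{Corollary to MiSt}'s hypothesis), and the integer matrices preserving a fixed subgroup and acting on a torsion-free divisible group form a group with no small connected subgroups. Then propagating centrality from the quotient $\delta(A)$ down through the unipotent filtration of $A\cap U$ is a routine induction on $n$ using that each successive quotient is a power of $(R,+)$ on which $G$ acts by a definable representation that must again be trivial by the same centerless-versus-$0$-definable mechanism, or directly by noting that a torsion-free completely solvable $A$ normal in centerless $G$ with $G$ centralizing $A/(A\cap U)$ forces $[G,A]\subseteq A\cap U$, and iterating lowers the derived length of $[G,A]$ until it vanishes.
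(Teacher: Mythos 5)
Your overall strategy --- split $A$ into an additive and a multiplicative part and use the dichotomy of Fact \ref{Corollary to MiSt} together with centerlessness of $G$ to kill the multiplicative part --- is the same as the paper's, but your handling of the second alternative has a genuine gap, in two places. First, you only produce the multiplicative part as a \emph{quotient} $\delta(A)=A/(A\cap U)$, not as a subgroup of $G$. The centerlessness argument needs a subgroup: if $A_m\le A$ is an actual definable subgroup definably isomorphic to a subgroup of $(R^{>0},\cdot)^l$, then rigidity (finiteness of uniformly definable families of automorphisms) plus definable connectedness of $G$ forces the conjugation action on $A_m$ to be trivial, i.e.\ $A_m\le Z(G)=\{e\}$, so $A_m$ is trivial. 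For the quotient $\delta(A)$, triviality of the induced action only says $[G,A]\subseteq A\cap U$, which is not a contradiction and does not make $\delta(A)$ trivial. The paper avoids this by citing Proposition 3.8 and Lemma 3.7 of \cite{PPS}, which give a definable \emph{direct product} decomposition $A=A_a\times A_m$ with $A_a\cong (R,+)^k$ and $A_m\cong (R^{>0},\cdot)^l$ realized as subgroups of $A$; to make your route work you would have to definably split your exact sequence in the second alternative as well (you only address splitting under the first alternative).

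Second, your attempt to patch this by ``propagating centrality down through the unipotent filtration'' to conclude that all of $A$ is central (hence trivial) is false. The rigidity that kills definable families of automorphisms applies, under the second alternative of Fact \ref{Corollary to MiSt}, to subgroups of $(R^{>0},\cdot)^l$; it emphatically does not apply to $(R,+)^k$, whose definable automorphism group contains all of $\GL_k(R)$. Concretely, the affine group $\{x\mapsto ax+b:\ a>0\}$ is centerless and contains $(R,+)$ as a normal abelian torsion-free subgroup on which conjugation acts by nontrivial scalings, so knowing that $G$ centralizes $A/(A\cap U)$ gives no control over the action on $A\cap U$. Moreover, had your propagation worked, the lemma would be vacuous (true only for trivial $A$), contradicting its use in the proof of Theorem \ref{completely solvableThm}, where $A$ is an infinite normal abelian subgroup. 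The correct endpoint of the second alternative is not a contradiction but exactly the desired conclusion: the multiplicative factor is trivial and $A=A_a$ is definably isomorphic to $(R,+)^k$.
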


\begin{proof} Since $A$ is a linear group,
it follows from Proposition 3.8 and Lemma 3.7 in \cite{PPS} that
there are definable subgroups $A_m$ and $A_a$ of $A$ such that
$A=A_a\times A_m$,  the group $A_a$ is definably isomorphic to a Cartesian
power $(R,+)^k$ and $A_m$ is definably isomorphic to a Cartesian power $(R^{>0}, \cdot)^l$.
If $(R, +)$ is definably isomorphic to $(R^{>0}, \cdot)$, there is
nothing to prove.

Assume $(R, +)$ and $(R^{>0}, \cdot)$ are not
definably isomorphic.  It is easy to see then that for any definable
automorphism $\sigma$ of $A$ we have $\sigma(A_a)=A_a$ and
$\sigma(A_m)=A_m$. Considering action of $G$ on $A$ by conjugation we
obtain that both $A_a$ and $A_m$ are normal subgroups of $G$.  By Fact
\ref{Corollary to MiSt} every uniformly definable family of
automorphisms  of $A_m$ is finite.  Since $H$ is connected, it implies
that the action of $H$ on $A_m$ is trivial, hence $A_m$ is in the
center of $H$, but $H$ is centerless. Thus $A_m$ is trivial and $A=A_a$.
\end{proof}

We can now prove the main result of this section.

\begin{theorem}\label{completely solvableThm}
Every torsion-free  group definable in an o-minimal structure  is completely solvable.
\end{theorem}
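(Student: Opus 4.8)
The plan is to reduce to the case of a definable group over a real closed field by passing to a saturated elementary extension (or by a suitable interpretation), and then to argue by induction on $\dim G$. The base case $\dim G \leq 1$ is immediate from Fact \ref{fact:one-dim}. For the inductive step, let $G$ be torsion-free of dimension $>1$. By Corollary \ref{an abelian group}, $G$ contains an infinite definable abelian normal subgroup $A$; we may take $A$ to be, for instance, the last nontrivial term of the derived series, which is definably connected by Fact \ref{BaJaOt}. Since $G$ is torsion-free, $A$ is torsion-free as well, and by Fact \ref{fact:tor-free}(3) every definable quotient of $G$ is again torsion-free. So $G/Z(G)$ is a torsion-free definable group of strictly smaller dimension (here I use that $G$ is not abelian, else we are done by applying Fact \ref{fact:tor-free}(1) together with Fact \ref{fact:one-dim} directly to get a complete flag); alternatively, quotient by a suitable one-dimensional subgroup provided by Fact \ref{existence of o-minimal} or Fact \ref{fact:tor-free}(2).

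More precisely, the cleanest route is: by Fact \ref{fact:tor-free}(2) there is a definable normal $H \triangleleft G$ with $G/H$ one-dimensional; by Fact \ref{fact:tor-free}(3), $H$ is torsion-free, so by the inductive hypothesis $H$ has a chain of definable subgroups $H=H_m > \dots > H_0=\{e\}$ with each $H_i$ normal \emph{in $H$} and each quotient one-dimensional. The difficulty is that these $H_i$ need not be normal in $G$, so the concatenation with $G > H$ is not yet a flag witnessing definable complete solvability of $G$. The main obstacle is therefore to upgrade ``normal in $H$'' to ``normal in $G$'', i.e. to find a flag of subgroups of $H$ that is stable under the conjugation action of all of $G$. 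This is exactly where Lemma \ref{P(A) for linear} and Fact \ref{Corollary to MiSt} enter: having embedded $G$ (modulo its center, which we may assume trivial after a preliminary quotient, since centrality issues only help) as a definable subgroup of some $\GL_n(\mathcal R)$ via Pillay's Lie-group analysis together with the linearization results of \cite{PPS}, we want to run a ``definable Lie–Kolchin'' argument.

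Concretely, I would pick a minimal infinite definable $G$-normal abelian subgroup $A \leq G$ (exists by Corollary \ref{an abelian group} and a dimension argument), observe $A$ is torsion-free, and apply Lemma \ref{P(A) for linear} to conclude $A \cong (R,+)^k$ definably. The $G$-action on $A$ by conjugation is then a definable action on a vector group; since $G$ is definably connected and $A$ is $G$-minimal of torsion-free type, a triangularization/weight argument — using that over an o-minimal field the relevant ``eigenvalue'' data lives in $(R,+)$ and $(R^{>0},\cdot)$, with the latter contributing only through the center by Fact \ref{Corollary to MiSt} as in the proof of Lemma \ref{P(A) for linear} — should produce a one-dimensional definable subgroup $A_1 \leq A$ that is normal in $G$. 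Then $A_1$ is central-free-quotient friendly: $G/A_1$ is torsion-free of smaller dimension, the inductive hypothesis gives a $G/A_1$-normal one-dimensional flag, and pulling back and prepending $A_1$ yields the desired $G$-normal flag for $G$. The step I expect to be genuinely delicate is establishing the existence of the $G$-normal one-dimensional $A_1$ inside the $G$-minimal vector group $A$ — that is, the definable analogue of the statement that a solvable connected linear group acting on a nonzero vector space has a common eigenline — carried out purely within the o-minimal structure, with the ``multiplicative'' part of the action handled via Fact \ref{Corollary to MiSt}.
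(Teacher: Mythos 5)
Your reduction matches the paper's almost exactly: induct on dimension, reduce to finding a definable one-dimensional normal subgroup, dispose of the abelian and non-trivial-center cases, use \cite{PPS00} to linearize the centerless case, and then invoke Corollary \ref{an abelian group} and Lemma \ref{P(A) for linear} to get a normal $A$ definably isomorphic to $(R,+)^k$. But the step you yourself flag as ``genuinely delicate'' --- producing a \emph{one-dimensional} definable subgroup of $A$ that is normal in $G$ --- is precisely where the entire content of the theorem lives, and your proposed ``triangularization/weight argument'' does not supply it. The classical Lie--Kolchin statement you are appealing to (a connected solvable linear group has a common eigenline) is false over a real closed field: $SO(2,R)$ is definably connected, abelian, and acts on $(R,+)^2$ with no invariant line. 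Indeed the group $\widetilde{E}^0(2)$ of Example \ref{sample:noncompl-solv} is connected, torsion-free, solvable, and acts on $\mathbb{R}^2$ with no invariant line --- the theorem is exactly the assertion that such a configuration cannot be definable in an o-minimal structure, so any proof must use torsion-freeness of the \emph{acting} group in an essential and o-minimal-specific way. Fact \ref{Corollary to MiSt} cannot do this job: it controls definable actions on the multiplicative group $(R^{>0},\cdot)^l$, whereas the obstruction here is a rotation-like action on the additive group $(R,+)^k$.

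The paper closes this gap with a fixed-point argument that is absent from your proposal: the set $\P(A)$ of one-dimensional definable subgroups of $A\cong (R,+)^l$ is identified with a projective space over $R$, hence is definably compact, and $G$ acts on it definably and continuously; Lemma \ref{sergei2} then shows that \emph{any} definable continuous action of a definable torsion-free group $H$ on a definably compact set has a fixed point. That lemma is proved by induction on $\dim H$ using Fact \ref{fact:tor-free}(2), with the one-dimensional base case resting on Strzebonski's theorem that a one-dimensional torsion-free definable group is definably homeomorphic to $(0,1)$, so that $\lim_{t\to 1} t.x$ exists in the definably compact space and is fixed. This is where torsion-freeness enters (a definable ``circle'' group would defeat the limit argument), and it is the idea your outline is missing.
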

\begin{proof}
By  Corollary 2.3 in \cite{PeSt05}, if $G$ is a torsion-free group
definable in an o-minimal structure and $H<G$ is a definable normal subgroup,
then the factor group  $G/N$ is
torsion-free as well. Thus,  by an easy induction on dimension,  it is
sufficient to show that every torsion-free group $G$ definable in an
o-minimal structure   has a definable one-dimensional (hence abelian) normal
subgroup.

We will prove an existence of a definable one-dimensional normal
subgroup by induction on the dimension of $G$.

\medskip
If $\dim(G)=1$ then there is nothing to prove.

\medskip
Let $G$ be a torsion-free group definable in o-minimal structure
$\mathcal M$ with $\dim(G)=k$ and we assume that every torsion-free
group definable in $\mathcal M$ of dimension less than $k$ has a
definable normal one-dimensional subgroup. We need to show that $G$
has a definable normal one-dimensional subgroup.

If $G$ is abelian then we are done by Fact \ref{existence of
  o-minimal}.  Assume $G$ is not abelian.  If the center $Z(G)$ is  non-trivial, then 
$\dim(Z(G))<k$, hence $Z(G)$ contains a definable normal
one-dimensional subgroup $A$.  Obviously $A$ is also normal in $G$.

Thus we may assume that $G$ is centerless.
If $G$ is a direct product $G=G_1\otimes G_2$ of definable proper
subgroups then, by induction hypothesis, $G_1$ has a definable normal
one-dimensional subgroup, and this subgroup is normal in $G$.  Thus we
may assume that $G$ is not a direct product of definable proper subgroups.
It follows then from  Theorems 3.1 and 3.2 in \cite{PPS00}
that there is a real closed field $R$ definable in $\mathcal M$ such that $G$
is definably isomorphic to a subgroup of $\GL(n,R)$.  Hence we may
assume that $\mathcal M$ is an expansion of a real closed field $R$
and $G$ is a definable subgroup of $\GL(n,R)$.

By Corollary \ref{an abelian group}, $G$ contains a nontrivial normal
definable abelian subgroup $A$, and by Lemma \ref{P(A) for linear} the
group $A$ is definably isomorphic to a Cartesian power $(R,+)^l$.

Let $\P(A)$ be the set of all definable one-dimensional subgroups of
$A$. Since $A$ is definably isomorphic to $(R,+)^l$ the set $\P(A)$
can be identified with the projective space $\P^l(R)$. In particular
it is definable and definably compact.

The group $G$ acts on $A$ by conjugation and this action induces a
continuous action of $G$ on the set $\P(A)$.  To finish the proof of the theorem it is
sufficient to show that under this action $G$ has a fixed point in
$\P(A)$.  It will follow from the following general lemma.

\begin{lemma}\label{sergei2}
Let $H$ be a torsion-free group definable
  in an o-minimal expansion of a real closed field.  Assume $H$ acts definably and
  continuously on a definably compact set $X$. Then $H$ has a fixed
  point in $X$.
\end{lemma}
\begin{proof}
We will do  induction on the dimension of $H$.  For $h\in H$ and
$x\in X$ we will denote by $h.x$ the image of $x$ under the action of
$h$.

\medskip
Assume $\dim(H)=1$.  Then, by \cite{str}, $H$ is definably
homeomorphic to the interval $(0,1)$. Let $x$ be an arbitrary element of $X$. Since $X$ is definably
compact, the limit $\lim_{t\to 1} t.x$ exists in $X$, and it is not
hard to see that this limit is fixed by $H$.

\medskip
Assume  $\dim(H)>1$.
By Fact \ref{fact:tor-free}(2), $H$ has a
definable normal subgroup $K<H$ with $\dim(H/K)=1$.  By induction
hypothesis, $K$ has a fixed point in $X$.  Let $X'\subseteq X$ be
the subset of all points in $X$ fixed by $K$.  By the continuity
of the action, it is a closed subset of $X$, and since $K$ is a normal
subgroup of $H$ the set $X'$  is $H$-invariant.

The action of $H$ on $X'$ induces an action of
$H/K$ on $X'$. Since $H/K$ is one-dimensional,  it has a fixed point in
$X'$, and this point is fixed by $H$.
\end{proof}

This finishes the proof of Theorem \ref{completely solvableThm}.
\end{proof}

Combining Theorems \ref{LieAreDef} and \ref{completely solvableThm} we
obtain a complete description of torsion-free solvable groups
definable in o-minimal expansions of the real field.

\begin{theorem}
  \label{thm:tor-free-mail}
For a connected torsion-free solvable Lie group $G$ the following are
equivalent.
\begin{enumerate}
\item $G$ is Lie isomorphic to a group definable in $\RR_{exp}$.
\item $G$ is Lie isomorphic to a group definable in an o-minimal
  expansion of the real field.
\item $G$ is completely solvable.
\end{enumerate}
\end{theorem}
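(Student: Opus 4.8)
The plan is to establish the chain of implications $(3)\Rightarrow(1)\Rightarrow(2)\Rightarrow(3)$, since $(1)$ and $(2)$ are single statements and $(3)$ closes the loop cheaply. The implication $(3)\Rightarrow(1)$ is exactly the content of Theorem \ref{LieAreDef}: if $G$ is connected, torsion-free, solvable (note that a completely solvable connected Lie group is in particular solvable, and we are given $G$ is torsion-free), then Theorem \ref{LieAreDef} produces a group definable in $\RR_{exp}$ that is Lie isomorphic to $G$. So this step is immediate. The implication $(1)\Rightarrow(2)$ is trivial, since $\RR_{exp}$ is an o-minimal expansion of the real field by Wilkie's theorem (the relevant Fact is quoted in the Preliminaries).

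The remaining implication $(2)\Rightarrow(3)$ is where the real work sits, but it too is almost entirely packaged in Theorem \ref{completely solvableThm}. Suppose $G$ is a connected torsion-free solvable Lie group that is Lie isomorphic to a group $G'$ definable in an o-minimal expansion $\mathcal M$ of the real field. Since $G'$ is torsion-free (being Lie isomorphic to the torsion-free $G$), Theorem \ref{completely solvableThm} tells us $G'$ is completely solvable; more precisely, by Definition \ref{completely solvableDef} and the proof of Theorem \ref{completely solvableThm}, $G'$ admits a chain of definable subgroups $G'=G'_n>\dotsb>G'_0=\{e\}$, each normal in $G'$, with each successive quotient one-dimensional. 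By Fact \ref{fact:defsubgr} each $G'_i$ is a Lie subgroup of $G'$, so transporting this chain across the Lie isomorphism gives a chain of closed connected normal Lie subgroups of $G$ with one-dimensional quotients; since $G$ is torsion-free its subgroups and quotients are as well, so each quotient is a one-dimensional simply connected Lie group. By Fact \ref{fact:lie-split} this says precisely that $G$ is completely solvable as a Lie group, giving $(3)$.

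The one point that warrants slightly more care, and which I expect to be the main (minor) obstacle, is the passage from the \emph{definable} flag given by Definition \ref{completely solvableDef} to an honest \emph{Lie}-theoretic complete flag in the sense of Fact \ref{fact:lie-split}: one must check that a definable normal subgroup of a torsion-free definable group, transported via a Lie isomorphism, is connected (hence its Lie-group quotient is simply connected one-dimensional rather than merely one-dimensional). This follows because a torsion-free definable group is definably connected by Fact \ref{fact:tor-free}(1), definable connectedness transfers to topological connectedness for groups definable in o-minimal expansions of the reals (they are Lie groups by Fact \ref{LieGroups}), and a connected Lie subgroup maps to a connected Lie subgroup under a Lie isomorphism; the quotient by a connected normal subgroup is then connected, and a connected one-dimensional Lie group that is torsion-free is $\RR$, i.e.\ simply connected, by Fact \ref{fact:tor-free-simply}. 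Once this bookkeeping is in place the theorem is just the concatenation $(3)\Rightarrow(1)\Rightarrow(2)\Rightarrow(3)$ and there is nothing further to prove.
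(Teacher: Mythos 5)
Your proof is correct and follows essentially the same route as the paper, which proves this theorem simply by citing Theorems \ref{LieAreDef} and \ref{completely solvableThm}; your chain $(3)\Rightarrow(1)\Rightarrow(2)\Rightarrow(3)$ is exactly that combination. The bookkeeping you supply for converting the definable flag of Definition \ref{completely solvableDef} into the Lie-theoretic complete flag of Fact \ref{fact:lie-split} is a genuine (if minor) detail that the paper leaves implicit, and you handle it correctly.
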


\section{Extensions of compact groups by torsion-free groups}\label{SS:Lie groups}

Theorem \ref{thm:tor-free-mail} provides a complete characterization
of connected torsion-free solvable Lie groups  definable in  o-minimal
expansions of the real field. In this section we extend it to a
characterization of solvable Lie groups.

The following fact follows from  \cite[Propositions 2.1 and 2.2]{CP}.

\begin{fact} \label{CP}
Let $G$ be a grop definable in an o-minimal structure. Then $G$  contains a maximal normal definable torsion-free subgroup $H$.

In addition, if $G$ is solvable and $H<G$ is the maximal normal definable
torsion-free subgroup then the group $G/H$ is definably compact.
\end{fact}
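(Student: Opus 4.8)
The plan is to prove the two assertions of Fact~\ref{CP} separately: the first is formal bookkeeping, the second carries the real content.

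\emph{Existence of the maximal subgroup.} The key remark is that the product of two normal definable torsion-free subgroups $H_1,H_2\trianglelefteq G$ is again one: $H_1H_2$ is a normal definable subgroup, and it is torsion-free because $H_1H_2/H_1\cong H_2/(H_1\cap H_2)$ is torsion-free by Fact~\ref{fact:tor-free}(3), $H_1$ is torsion-free, and an extension of a torsion-free group by a torsion-free group is torsion-free (examine the order of an element and of its image). I would then take a normal definable torsion-free subgroup $H$ of maximal dimension; for any other such $H'$ the subgroup $HH'$ is normal, definable, torsion-free, and of dimension $\ge\dim H$, hence of dimension exactly $\dim H$. Since torsion-free definable groups are definably connected (Fact~\ref{fact:tor-free}(1)), the inclusion $H\le HH'$ of definably connected groups of equal dimension forces $H=HH'$, i.e.\ $H'\le H$. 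Thus $H$ absorbs every normal definable torsion-free subgroup, as required; note this step does not use solvability of $G$.

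\emph{The quotient is definably compact.} Put $\overline G=G/H$, a definable solvable group, and suppose it is not definably compact. Then its definably connected component $\overline G{}^0$ is not definably compact either, and it suffices to exhibit a nontrivial \emph{characteristic} definable torsion-free subgroup $N\le\overline G{}^0$: being characteristic in $\overline G{}^0$, which is characteristic in $\overline G$, it is normal in $\overline G$, so its preimage under $G\to\overline G$ is a normal definable subgroup of $G$ properly containing $H$, and it is torsion-free as an extension of $H$ by $N$, contradicting maximality of $H$. To produce $N$ I would prove, by induction on dimension, that every definably connected solvable definable group $L$ that is not definably compact has a nontrivial characteristic definable torsion-free subgroup. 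If $\dim L=1$, then $L$ is torsion-free by the classification of one-dimensional definable groups. If $L$ is abelian, the maximal definable torsion-free subgroup of $L$ is characteristic (by the argument above it is the unique one of maximal dimension) and nontrivial (an abelian definably connected definable group with no nontrivial definable torsion-free subgroup is definably compact). If $L$ is non-abelian and $D:=[L,L]$ is not definably compact, then $D$ is definable, definably connected (Fact~\ref{BaJaOt}) and of strictly smaller dimension, so the inductive hypothesis applied to $D$ yields a characteristic definable torsion-free subgroup of $D$, which is characteristic in $L$ as well.

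The remaining case is the main obstacle: $L$ non-abelian with $D=[L,L]$ definably compact. I would rule it out. Conjugation gives a definable action of $L$ on $D$; since $D$ is definably compact and definably connected it admits no infinite uniformly definable family of automorphisms (rigidity of definably compact definable groups — the same phenomenon already exploited in Fact~\ref{Corollary to MiSt} and Lemma~\ref{P(A) for linear}), so, $L$ being definably connected, this action is trivial, whence $D\le Z(L)$ and $L$ is nilpotent of class $\le 2$. Now I would invoke the structure of definably connected nilpotent definable groups (the o-minimal analogue of the decomposition of a connected nilpotent Lie group as a central torus times its simply connected part): the maximal definably compact subgroup of $L$ is central and characteristic, with a definable torsion-free complement, and the commutator subgroup lands entirely in that torsion-free complement. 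Hence $D=[L,L]$ would be simultaneously definably compact and torsion-free, so trivial by Fact~\ref{existence of o-minimal}, contradicting non-abelianity of $L$. Assembling the rigidity of definably compact groups with the torus-times-torsion-free decomposition of nilpotent definable groups, and verifying the one-dimensional and abelian structure facts used above, is where the work lies; this is in essence the route taken in \cite{CP}.
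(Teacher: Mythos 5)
First, a point of reference: the paper does not prove this statement at all — it is quoted from \cite[Propositions 2.1 and 2.2]{CP} — so you are in effect reconstructing the Conversano--Pillay argument from scratch. Your first paragraph does this correctly and completely: the product of two normal definable torsion-free subgroups is again one, a subgroup of maximal dimension therefore absorbs every other, and definable connectedness of torsion-free definable groups (Fact~\ref{fact:tor-free}(1)) upgrades equality of dimensions to equality of groups. The reduction in the second paragraph (pass to $(G/H)^0$, induct on dimension, and split on whether $[L,L]$ is definably compact) is also sound, and the base case, the abelian case, and the case where $[L,L]$ is not definably compact are handled properly, modulo the fact that the abelian case needs the full Peterzil--Steinhorn theorem (any definably connected definable group that is not definably compact contains a one-dimensional torsion-free definable subgroup), of which Fact~\ref{existence of o-minimal} as stated is only the torsion-free special case.

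The gap is in the remaining case, $D=[L,L]$ nontrivial and definably compact, which is the crux of the whole second assertion. Two black boxes are invoked there. The ``rigidity'' you appeal to is not really Fact~\ref{Corollary to MiSt}, which concerns subgroups of $(R^{>0},\cdot)^l$; what you need is the Peterzil--Starchenko theorem that any uniformly definable family of definable homomorphisms into a definably connected definably compact abelian group is finite (together with the fact that a definably compact definably connected solvable group is abelian, so that it applies to $D$). That result is true and is indeed what \cite{CP} uses, but it is not among the paper's stated facts and cannot be imported by analogy. More seriously, the ``central torus times torsion-free part'' decomposition of definably connected nilpotent definable groups with which you conclude is essentially a strengthening of the very statement being proven (existence of a normal definable torsion-free subgroup with definably compact quotient, plus a splitting), so as written the argument is circular at its most delicate point. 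The repair is to apply rigidity a second time instead of the splitting: once $D$ is central, each $y\mapsto [x,y]$ is a definable homomorphism from $L$ (factoring through the abelian $L/D$) into the definably compact abelian $D$, and $x\mapsto [x,\cdot]$ is a homomorphism from $L$ into this uniformly definable, hence finite, set of homomorphisms; definable connectedness of $L$ forces it to be trivial, so $L$ is abelian, the desired contradiction. With that replacement, and with the Peterzil--Starchenko rigidity theorem properly cited, your proof goes through.
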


Combining the above fact with Theorem \ref{completely solvableThm} and
using Fact \ref{fact:defsubgr} we obtain the following corollary.

\begin{corollary}\label{cor:solv-Lie-def} Let $G$ be a solvable Lie group Lie isomorphic to a group
definable in an o-minimal expansion of the real field. Then $G$
contains a normal Lie subgroup $H$ such that
\begin{enumerate}[(a)]
\item The group $H$ is connected torsion-free and completely solvable.
\item The factor group $G/H$ is compact.
\end{enumerate}
\end{corollary}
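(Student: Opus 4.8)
The plan is to transport the o-minimal structure theory already developed for definable groups across the given Lie isomorphism, and then recognize the relevant definable subgroup as a Lie subgroup. First I would fix a Lie isomorphism $\psi\colon G\to \bar G$, where $\bar G$ is a group definable in an o-minimal expansion $\mathcal M$ of the real field. Since $\bar G$ is solvable (being isomorphic to the solvable $G$) and definable, Fact \ref{CP} applies: $\bar G$ contains a maximal normal definable torsion-free subgroup $\bar H$, and the quotient $\bar G/\bar H$ is definably compact. By Fact \ref{fact:defsubgr}, $\bar H$ is a Lie subgroup of $\bar G$, so $H:=\psi^{-1}(\bar H)$ is a Lie subgroup of $G$, normal because $\bar H$ is normal in $\bar G$.

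For item (a), the group $\bar H$ is torsion-free by construction. By Fact \ref{fact:tor-free}(1) it is definably connected, hence connected as a Lie group, so $H$ is connected and torsion-free. To get complete solvability, I would invoke Theorem \ref{completely solvableThm}: every torsion-free group definable in an o-minimal structure is completely solvable. Strictly speaking, Theorem \ref{completely solvableThm} produces a \emph{definably} completely solvable group (a definable chain with one-dimensional quotients); but since $\bar H$ is definable in an expansion of the real field, each subgroup in the chain is a Lie subgroup by Fact \ref{fact:defsubgr}, each quotient is a one-dimensional connected Lie group, and transporting this chain through $\psi$ and applying Fact \ref{fact:lie-split} shows that $H$ is completely solvable as a Lie group. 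This is the step that requires the most care: one must check that definable complete solvability genuinely yields the Lie-theoretic complete solvability of Fact \ref{fact:lie-split}, which is where Fact \ref{fact:defsubgr} (definable subgroups are closed, hence Lie) is doing the real work.

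For item (b), the quotient $G/H$ is Lie isomorphic to $\bar G/\bar H$, which is definably compact by Fact \ref{CP}. A group definable in an o-minimal expansion of the real field that is definably compact is compact in the Lie sense — its underlying definable set is closed and bounded in some $\RR^k$ under the definable manifold structure of Fact \ref{LieGroups}, and o-minimal definable compactness coincides with topological compactness over the reals. Hence $G/H$ is a compact Lie group, completing the verification. The main obstacle, as noted, is not any deep argument but the bookkeeping of passing between the o-minimal notion of ``definably completely solvable'' and the differential-geometric notion in Fact \ref{fact:lie-split}; everything else is a direct citation of the facts assembled above.
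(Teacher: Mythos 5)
Your proposal is correct and follows the same route the paper intends: Fact \ref{CP} for the maximal normal definable torsion-free subgroup with definably compact quotient, Theorem \ref{completely solvableThm} for complete solvability, and Fact \ref{fact:defsubgr} to pass from definable subgroups to Lie subgroups. Your extra care in translating definable complete solvability into the Lie-theoretic notion of Fact \ref{fact:lie-split}, and definable compactness into topological compactness over $\RR$, fills in exactly the bookkeeping the paper leaves implicit.
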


Our goal is to show that the converse in the above corollary is also
true.

We will need a lemma.

\begin{lemma}\label{lem:lemma2} Let $G$ be a Lie group and $\mathcal
  M$ an o-minimal expansion of $\RR_{an}$. Assume $G$ is a semi-direct
  product of a normal Lie subgroup $H$ and a compact subgroup $K$. If
  $H$ is Lie isomorphic to a group definable in $\mathcal M$ then $G$ also
  is isomorphic to a group definable in $\mathcal M$.
\end{lemma}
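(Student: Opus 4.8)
The plan is to realize $G = H \rtimes K$ concretely by producing a definable copy of $H$ in $\mathcal{M}$ on which $K$ acts definably, and then taking the definable semidirect product. First I would use the hypothesis to fix a Lie isomorphism $H \cong H'$ with $H'$ a group definable in $\mathcal{M}$; transporting the $K$-action along this isomorphism, we may assume $H$ itself is definable in $\mathcal{M}$ and that we are given an (a priori only smooth) action $\alpha\colon K \to \operatorname{Aut}(H)$ by Lie automorphisms. The key point to establish is that this action, together with the group $K$, can be taken to be definable in $\mathcal{M}$: once we know that the map $K \times H \to H$, $(k,h)\mapsto \alpha(k)(h)$ is definable and that $K$ is definable, the underlying set $H \times K$ with multiplication $(h_1,k_1)(h_2,k_2) = (h_1\,\alpha(k_1)(h_2),\,k_1k_2)$ is manifestly definable in $\mathcal{M}$ and Lie isomorphic to $G$, finishing the proof.

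To get definability of $K$ and of the action I would exploit that $K$ is compact and that $\mathcal{M}$ expands $\RR_{an}$. A compact Lie group $K$ has a faithful finite-dimensional representation, so it is Lie isomorphic to a compact subgroup of some $\GL_m(\RR)$; a compact subgroup of $\GL_m(\RR)$ is a closed bounded real-analytic submanifold, hence (being contained in a compact box) definable in $\RR_{an}$, with definable multiplication. Thus $K$ may be taken definable in $\mathcal{M}$. For the action, $\operatorname{Aut}(H)$ is itself a Lie group acting on the Lie algebra $\mathfrak h$ of $H$ via the derivative, and since $H$ is torsion-free solvable (being completely solvable by Corollary \ref{cor:solv-Lie-def}, or at any rate simply connected by the standing hypotheses in this section) the exponential map identifies $H$ with $\mathfrak h$, so an automorphism of $H$ is determined by a linear automorphism of $\mathfrak h$; concretely, using the embedding of $H$ into $T^+_n(\RR)$ from Lemma \ref{lemma:com-tr} and the fact (Lemma \ref{lem:exp-def-in-m}, Remark \ref{rem:exp-upper-tr}) that $Exp_n$ and its inverse are definable, the action of $K$ on $H$ is conjugate to a linear action on $\mathfrak h \subseteq t_n(\RR)$ transported through $Exp_n$. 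So it suffices to see that the homomorphism $K \to \GL(\mathfrak h)$ is definable. Its image is a compact subgroup of $\GL(\mathfrak h)$, hence definable in $\RR_{an}$ as above, and the graph of the homomorphism $K \to \GL(\mathfrak h)$ is a compact real-analytic subset of the compact set $K \times \operatorname{(image)}$, hence definable in $\RR_{an}$; composing with the definable maps $Exp_n$ and $Exp_n^{-1}$ yields definability in $\mathcal{M}$ of the full action map $K \times H \to H$.

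I expect the main obstacle to be the bookkeeping of transferring the smooth action to a definable one: one must be careful that "$K$ acts by Lie automorphisms of $H$" really does factor through the definable linear action on $\mathfrak h$, i.e. that a connected simply connected completely solvable Lie group has $\operatorname{Aut}(H) \cong \operatorname{Aut}(\mathfrak h)$ and that this identification is compatible with the definable structure coming from $Exp_n$. Granting the functorial correspondence between simply connected Lie groups and their Lie algebras (already invoked in the excerpt, e.g. in the proofs of Lemma \ref{Dixmier2} and Lemma \ref{lemma:com-tr}), this is routine but is the step where the hypothesis "$\mathcal{M}$ expands $\RR_{an}$" and the compactness of $K$ are genuinely used, and it is where care is needed to keep everything uniformly definable rather than merely definable pointwise.
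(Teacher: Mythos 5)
Your proposal is correct and follows essentially the same route as the paper: make $K$ definable (the paper realizes it as a semialgebraic group via its algebraic group structure, you as a compact analytic subgroup of some $\GL_m(\RR)$, definable in $\RR_{an}$), observe that the homomorphism from the compact group $K$ into $\mathrm{Aut}(H')$ is real analytic with compact graph and hence definable in $\RR_{an}$, and then form the definable semidirect product. The only difference is that you spell out, via the identification of $\mathrm{Aut}(H')$ with $\mathrm{Aut}(\mathfrak{h}')$ and the definability of $Exp_n$, why the full action map $K\times H'\to H'$ (and not merely the homomorphism into $\mathrm{Aut}(H')$) is definable --- a point the paper's proof leaves implicit.
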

\begin{proof} Let $\gamma\colon K\to Aut(H)$ be the group homomorphism
given by the action of $K$ on $H$ by conjugations.  So $G=H \rtimes_{\gamma} K$.

Any compact  Lie group  admits a structure of an algebraic group
(see \cite[Chapter 4, Corollary to Theorem 2.3]{OnishIII}), hence $K$
is Lie isomorphic to a semialgebraic group $K'$ definable in the real
field.
Let $H'$ be a group definable in $\mathcal M$ Lie isomorphic to $H$.
We have that $G$ is Lie isomorphic to $H' \rtimes_{\gamma'} K'$ for
some $\gamma'\colon K'\to Aut(H')$. The group $Aut(H')$ is a Lie group
and the group homorphism $\gamma'$ from the compact Lie group $K'$
into the Lie group $Aut(H')$ is a real analytic map on a compact set,
so it is definable  in $\RR_{an}$.
The corresponding  group isomorphism between $H\rtimes_{\gamma} K$ and
$H'\rtimes_{\gamma'} K'$ is
 a Lie isomorphism as required.
\end{proof}

 \newpage

\begin{theorem}\label{thm:solv-lie-def}
For a solvable Lie group $G$ the following are equivalent.
\begin{enumerate}
\item  $G$ is Lie isomorphic to a group definable in $\RR_{an,exp}$.
\item  $G$ is Lie isomorphic to a group definable in an o-minimal
  expansion of the real field.
\item $G$ contains a normal connected torsion-free completely solvable
  Lie subgroup $H$ such that $G/H$ is compact.
\end{enumerate}
\end{theorem}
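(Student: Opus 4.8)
The implications $(1)\Rightarrow(2)$ and $(2)\Rightarrow(3)$ require nothing new: $\RR_{an,exp}$ is an o-minimal expansion of the real field, and $(2)\Rightarrow(3)$ is Corollary~\ref{cor:solv-Lie-def}. The content is $(3)\Rightarrow(1)$, and the plan is the following.

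Suppose $G$ has a normal, connected, torsion-free, completely solvable Lie subgroup $H$ with $Q:=G/H$ compact. I would first show that $G$ is an internal semidirect product $G=H\rtimes K$ for a suitable compact subgroup $K\le G$. Granting that, the theorem follows immediately: $H$ is connected torsion-free completely solvable, so by Theorem~\ref{LieAreDef} it is Lie isomorphic to a group definable in $\RR_{exp}$, and hence to one definable in $\RR_{an,exp}$; then Lemma~\ref{lem:lemma2}, applied with $\mathcal M=\RR_{an,exp}$ (an o-minimal expansion of $\RR_{an}$), yields that $G\cong H\rtimes K$ is Lie isomorphic to a group definable in $\RR_{an,exp}$, which is $(1)$.

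For the decomposition $G=H\rtimes K$: since $H$ is connected, $\pi_0(G)\cong\pi_0(Q)$, which is finite because $Q$ is compact, so $G$ is a Lie group with finitely many components and, by the Cartan--Iwasawa--Malcev theorem, has a maximal compact subgroup $K$ which is a deformation retract of $G$ (indeed $G$ is diffeomorphic to $K\times\RR^n$). Because $H$ is torsion-free it has no nontrivial compact subgroup, so $K\cap H=\{e\}$; thus the projection $p\colon G\to Q$ restricts to an injective homomorphism on $K$. On the other hand $H$ is diffeomorphic to some $\RR^d$ (Fact~\ref{fact:tor-free-simply}), so $p$ is a fibre bundle with contractible fibre, hence a homotopy equivalence, and composing with the inclusion of the deformation retract $K$ shows $p|_K\colon K\to Q$ is a homotopy equivalence. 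Being also an injective Lie group homomorphism, $p|_K$ identifies $K$ with a closed subgroup $p(K)\le Q$; then $p(K)$ and $Q$ are homotopy equivalent closed manifolds, so they have the same dimension, and therefore $p(K)$ is an open-and-closed subgroup of $Q$ meeting every connected component, i.e. $p(K)=Q$. Hence $KH=G$, and since $H$ is normal we get $G=H\rtimes K$.

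The one place where genuine care is needed is the last step --- that a maximal compact subgroup of $G$ is mapped onto $G/H$ --- which rests on the contractibility of $H$; it could also be quoted from the structure theory of Lie groups with finitely many components. All the rest is assembly of results already established.
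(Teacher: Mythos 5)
Your proposal is correct, and the implications $(1)\Rightarrow(2)$ and $(2)\Rightarrow(3)$ are handled exactly as in the paper; the difference lies in how you obtain the splitting $G=H\rtimes K$ with $K$ compact, which is the crux of $(3)\Rightarrow(1)$ in both treatments. The paper argues algebraically: since $G/H$ is a compact connected solvable Lie group it is abelian (Iwasawa), so $H$ contains the commutator subgroup; then Malcev's decomposition of a connected solvable Lie group as a semidirect product $T\ltimes F$ of a torus and a simply connected normal subgroup is invoked, and one checks $H=F$, so $T$ is the desired compact complement. You instead invoke the Cartan--Iwasawa--Malcev maximal compact subgroup theorem for Lie groups with finitely many components and argue topologically: $K\cap H$ is trivial because $H$ is torsion-free, and $p|_K\colon K\to G/H$ is surjective because it is a homotopy equivalence (contractibility of the fibre $H\cong\RR^d$ makes $p$ one, and $K$ is a deformation retract of $G$) between closed manifolds, forcing $p(K)$ to be an open-and-closed subgroup meeting every component of $G/H$. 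Each step of your argument checks out. What your route buys is generality and robustness: it never uses that $G$ or $G/H$ is connected (the paper's appeal to ``compact connected solvable $\Rightarrow$ abelian'' does), nor that $G/H$ is abelian, so it covers the theorem as literally stated for possibly disconnected $G$; what the paper's route buys is brevity, since for connected solvable groups Malcev's theorem hands over the complement directly. After the splitting, both proofs conclude identically via Theorem~\ref{LieAreDef} and Lemma~\ref{lem:lemma2}.
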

\begin{proof}
Implication  $(1)\Rightarrow   (2)$ is
obvious, and  $(2) \Rightarrow   (3)$ is Corollary
\ref{cor:solv-Lie-def}. It
remains to
show that (3) implies (1).

Let $G$ be a solvable Lie group with a normal connected torsion-free
subgroup $H$ such that the group $K=G/H$ is compact.

Since $K$ is a  solvable compact connected  Lie group it is abelian by
\cite[Lemma 2.2]{iwasawa}, hence $G/H$ is abelian and $H$ contains the
commutator subgroup $G'$ of $G$.
Since $H$ is
simply connected, by \cite[Theorem 5.1]{OnishI} and  \cite[Chapter 2, Theorem
3.4(1)]{OnishIII}, $G'$ is closed in $H$, connected  and simply
connected. By a theorem of Malcev (see \cite[Chapter 2, Theorem
7.1]{OnishIII},   $G$ can be decomposed into a semi-direct product
$T\ltimes F$ of a torus $T$ and a simply-connected Lie subgroup $F$.
It is not hard to see that we must have $H=F$.

By Theorem \ref{LieAreDef} the group $H$ is Lie isomorphic to a group definable
in $\RR_{exp}$, and by Lemma \ref{lem:lemma2} $G$ is Lie
isomorphic to a group definable in $\mathcal M = \RR_{an,ex}$.
\end{proof}

\begin{remark}  Let $G$ be a connected real Lie group with compact
  Levi subgroups.  It follows from Lemma \ref{lem:lemma2} and Theorem \ref{thm:solv-lie-def} that $G$ is Lie
  isomorphic to a group definable  in an o-minimal expansion of the
  real field  if and only if its solvable radical $R$ is Lie
  isomorphic to a group definable in $\RR_{an,exp}$.
 \end{remark}

\begin{bibdiv}
\begin{biblist}

\bib{BaroJagOt}{article}{
      author={Baro, El{\'{\i}}as},
      author={Jaligot, Eric},
      author={Otero, Margarita},
       title={Commutators in groups definable in o-minimal structures},
        date={2012},
        ISSN={0002-9939},
     journal={Proc. Amer. Math. Soc.},
      volume={140},
      number={10},
       pages={3629\ndash 3643},
  url={http://dx.doi.org.ezproxy.uniandes.edu.co:8080/10.1090/S0002-9939-2012-11209-2},
      review={\MR{2929031}},
}

\bib{Borel}{book}{
      author={Borel, Armand},
       title={Linear algebraic groups},
     edition={Second},
      series={Graduate Texts in Mathematics},
   publisher={Springer-Verlag, New York},
        date={1991},
      volume={126},
        ISBN={0-387-97370-2},
         url={http://dx.doi.org/10.1007/978-1-4612-0941-6},
      review={\MR{1102012 (92d:20001)}},
}

\bib{CP}{article}{
      author={Conversano, Annalisa},
      author={Pillay, Anand},
       title={Connected components of definable groups and {$o$}-minimality
  {I}},
        date={2012},
        ISSN={0001-8708},
     journal={Adv. Math.},
      volume={231},
      number={2},
       pages={605\ndash 623},
  url={http://dx.doi.org.ezproxy.uniandes.edu.co:8080/10.1016/j.aim.2012.05.022},
      review={\MR{2955185}},
}

\bib{Di57}{article}{
      author={Dixmier, Jacques},
       title={L'application exponentielle dans les groupes de {L}ie
  r\'esolubles},
        date={1957},
        ISSN={0037-9484},
     journal={Bull. Soc. Math. France},
      volume={85},
       pages={113\ndash 121},
      review={\MR{0092930 (19,1182a)}},
}

\bib{Ed}{article}{
      author={Edmundo, M{\'a}rio~J.},
       title={Solvable groups definable in o-minimal structures},
        date={2003},
        ISSN={0022-4049},
     journal={J. Pure Appl. Algebra},
      volume={185},
      number={1-3},
       pages={103\ndash 145},
         url={http://dx.doi.org/10.1016/S0022-4049(03)00085-9},
      review={\MR{2006422 (2004j:03048)}},
}

\bib{OnishIII}{incollection}{
      author={Gorbatsevich, Vladimir V.},
      author={Onishchik, Arkady L.},
      author={Vinberg, {\`E}rnest B.},
       title={Structure of lie groups and lie algebras},
        date={1994},
   booktitle={Lie groups and {L}ie algebras, {III}},
      series={Encyclopaedia Math. Sci.},
      volume={41},
   publisher={Springer, Berlin},
       pages={1\ndash 246},
}

\bib{iwasawa}{article}{
      author={Iwasawa, Kenkichi},
       title={On some types of topological groups},
        date={1949},
     journal={Ann. of Math.},
      volume={50},
      number={3},
       pages={507\ndash 558},
}

\bib{Knapp}{book}{
      author={Knapp, Anthony~W.},
       title={Lie groups beyond an introduction},
     edition={Second},
      series={Progress in Mathematics},
   publisher={Birkh\"auser Boston, Inc., Boston, MA},
        date={2002},
      volume={140},
        ISBN={0-8176-4259-5},
      review={\MR{1920389 (2003c:22001)}},
}

\bib{MiSt}{article}{
      author={Miller, Chris},
      author={Starchenko, Sergei},
       title={A growth dichotomy for o-minimal expansions of ordered groups},
        date={1998},
        ISSN={0002-9947},
     journal={Trans. Amer. Math. Soc.},
      volume={350},
      number={9},
       pages={3505\ndash 3521},
      review={\MR{1491870 (99e:03025)}},
}

\bib{OnishI}{incollection}{
      author={Onishchik, Arkady L.},
      author={Vinberg, {\`E}rnest B.},
       title={Foundations of {L}ie theory},
        date={1993},
   booktitle={Lie groups and {L}ie algebras, {I}},
      series={Encyclopaedia Math. Sci.},
      volume={20},
   publisher={Springer, Berlin},
       pages={1\ndash 94},
}

\bib{PPS00}{article}{
      author={Peterzil, Ya'acov},
      author={Pillay, Anand},
      author={Starchenko, Sergei},
       title={Definably simple groups in o-minimal structures},
        date={2000},
        ISSN={0002-9947},
     journal={Trans. Amer. Math. Soc.},
      volume={352},
      number={10},
       pages={4397\ndash 4419},
  url={http://dx.doi.org.ezproxy.uniandes.edu.co:8080/10.1090/S0002-9947-00-02593-9},
      review={\MR{1707202 (2001b:03036)}},
}

\bib{PPS}{article}{
      author={Peterzil, Ya'acov},
      author={Pillay, Anand},
      author={Starchenko, Sergei},
       title={Linear groups definable in o-minimal structures},
        date={2002},
        ISSN={0021-8693},
     journal={J. Algebra},
      volume={247},
      number={1},
       pages={1\ndash 23},
  url={http://dx.doi.org.ezproxy.uniandes.edu.co:8080/10.1006/jabr.2001.8861},
      review={\MR{1873380 (2002i:03043)}},
}

\bib{PeSt05}{article}{
      author={Peterzil, Ya'acov},
      author={Starchenko, Sergei},
       title={On torsion-free groups in o-minimal structures},
        date={2005},
        ISSN={0019-2082},
     journal={Illinois J. Math.},
      volume={49},
      number={4},
       pages={1299\ndash 1321 (electronic)},
  url={http://projecteuclid.org.ezproxy.uniandes.edu.co:8080/getRecord?id=euclid.ijm/1258138139},
      review={\MR{2210364 (2007b:03058)}},
}

\bib{PeSte}{article}{
      author={Peterzil, Ya'acov},
      author={Steinhorn, Charles},
       title={Definable compactness and definable subgroups of o-minimal
  groups},
        date={1999},
        ISSN={0024-6107},
     journal={J. London Math. Soc. (2)},
      volume={59},
      number={3},
       pages={769\ndash 786},
  url={http://dx.doi.org.ezproxy.uniandes.edu.co:8080/10.1112/S0024610799007528},
      review={\MR{1709079 (2000i:03055)}},
}

\bib{Pi88}{article}{
      author={Pillay, Anand},
       title={On groups and fields definable in {$o$}-minimal structures},
        date={1988},
        ISSN={0022-4049},
     journal={J. Pure Appl. Algebra},
      volume={53},
      number={3},
       pages={239\ndash 255},
  url={http://dx.doi.org.ezproxy.uniandes.edu.co:8080/10.1016/0022-4049(88)90125-9},
      review={\MR{961362 (89i:03069)}},
}

\bib{str}{article}{
      author={Strzebonski, Adam~W.},
       title={One-dimensional groups definable in {${\rm o}$}-minimal
  structures},
        date={1994},
        ISSN={0022-4049},
     journal={J. Pure Appl. Algebra},
      volume={96},
      number={2},
       pages={203\ndash 214},
         url={http://dx.doi.org/10.1016/0022-4049(94)90128-7},
      review={\MR{1303546 (95j:03068)}},
}

\bib{LVD}{book}{
      author={van~den Dries, Lou},
       title={Tame topology and o-minimal structures},
      series={London Mathematical Society Lecture Note Series},
   publisher={Cambridge University Press},
     address={Cambridge},
        date={1998},
      volume={248},
        ISBN={0-521-59838-9},
         url={http://dx.doi.org/10.1017/CBO9780511525919},
      review={\MR{1633348 (99j:03001)}},
}

\bib{vdd}{book}{
      author={van~den Dries, Lou},
       title={Tame topology and o-minimal structures},
      series={London Mathematical Society Lecture Note Series},
   publisher={Cambridge University Press, Cambridge},
        date={1998},
      volume={248},
        ISBN={0-521-59838-9},
  url={http://dx.doi.org.ezproxy.uniandes.edu.co:8080/10.1017/CBO9780511525919},
      review={\MR{1633348 (99j:03001)}},
}

\bib{MMV}{article}{
      author={van~den Dries, Lou},
      author={Macintyre, Angus},
      author={Marker, David},
       title={The elementary theory of restricted analytic fields with
  exponentiation},
        date={1994},
        ISSN={0003-486X},
     journal={Ann. of Math. (2)},
      volume={140},
      number={1},
       pages={183\ndash 205},
         url={http://dx.doi.org.ezproxy.uniandes.edu.co:8080/10.2307/2118545},
      review={\MR{1289495 (95k:12015)}},
}

\bib{DM}{article}{
      author={van~den Dries, Lou},
      author={Miller, Chris},
       title={{Geometric categories and o-minimal structures}},
        date={1996},
     journal={Duke Mathematical Journal},
      volume={84},
      number={2},
       pages={497\ndash 540},
}

\bib{wilkie}{article}{
      author={Wilkie, Alex J.},
       title={Model completeness results for expansions of the ordered field of
  real numbers by restricted {P}faffian functions and the exponential
  function},
        date={1996},
        ISSN={0894-0347},
     journal={J. Amer. Math. Soc.},
      volume={9},
      number={4},
       pages={1051\ndash 1094},
         url={http://dx.doi.org/10.1090/S0894-0347-96-00216-0},
      review={\MR{1398816 (98j:03052)}},
}

\end{biblist}
\end{bibdiv}


\end{document}